\newif\ifpictures
\numberwithin{equation}{section}
\newtheorem{theorem}{Theorem}
\newtheorem{proposition}[theorem]{Proposition}
\newtheorem{lemma}[theorem]{Lemma}
\newtheorem{corollary}[theorem]{Corollary}
\theoremstyle{definition}
\newtheorem{example}[theorem]{Example}
\newtheorem{definition}[theorem]{Definition}
\newtheorem{remark}[theorem]{Remark}
\numberwithin{theorem}{section}
\newcounter{FNC}[page]
\def\newfootnote#1{{\addtocounter{FNC}{2}$^\fnsymbol{FNC}$%
     \let\thefootnote\relax\footnotetext{$^\fnsymbol{FNC}$#1}}}
\newcommand{\C}{\mathbf{C}}
\newcommand{\N}{\mathbf{N}}
\newcommand{\R}{\mathbf{R}}
\renewcommand{\P}{\mathbf{P}}
\newcommand{\Z}{\mathbf{Z}}
\newcommand{\E}{\mathbb{E}}
\newcommand{ \<}{\langle}
\renewcommand{\>}{\rangle}
\newcommand{\br}{\operatorname{br}}
\newcommand{\intt}{\operatorname{int}}
\newcommand{\disc}{\operatorname{Disc}}
\newcommand{\res}{\operatorname{Res}}
\newcommand\cD{{\ensuremath{\mathcal{D}}}\xspace}
\newcommand\cF{{\ensuremath{\mathcal{F}}}\xspace}
\newcommand\del{{\ensuremath{\partial}}}
\newcommand\CB{{\ensuremath{\mathcal{CB}}}\xspace}
\definecolor{NiceBlue}{rgb}{0.2,0.2,0.75}
\DeclareMathOperator{\conv}{conv}
\DeclareMathOperator{\Sol}{Sol}
\DeclareMathOperator{\vol}{vol}
\DeclareMathOperator{\rank}{rank}
\DeclareMathOperator{\Mon}{Mon}
\newcommand{\excise}[1]{}%{$\star$\textsc{#1}$\star$}
\title{A Zariski theorem for monodromy of $A$-hypergeometric systems}
\begin{document}

\author{Jens Forsg{\aa}rd}

\address{Jens Forsg{\aa}rd\\ Universiteit Utrecht\\ Mathematisch Instituut\\ Postbus 80010\\
3508 TA Utrecht\\ The Netherlands}
\email{j.b.forsgaard@uu.nl}

%\thanks{J.F.~gratefully acknowledges the support of Vergstiftelsen and the NCCR SwissMAP project.}

\author{Laura Felicia Matusevich}
\address{Laura Felicia Matusevich\\ Department of Mathematics \\
Texas A\&M University \\ College Station, TX 77843.}
\email{laura@math.tamu.edu}

% % % \subjclass[2010]{11E25, 12D10, 14M25, 14P10, 14T05, 26C10, 52B20}
%\keywords{Convexity, discriminant, fewnomials, nonnegative polynomial, positivity, sparsity, sum of squares}
 
\begin{abstract}
We give conditions under which the monodromy group of an $A$-hyper\-geo\-metric
system is invariant under modifications of the collection of characters $A$.
The key ingredient is a Zariski--Lefschetz type theorem for principal
$A$-determinants. 
\end{abstract}

\maketitle

%%%%%%%%%%%%%%%%%%%%%%%%%%%
\section{Introduction}
%%%%%%%%%%%%%%%%%%%%%%%%%%%
\label{sec:Intro}

This article concerns the study of $A$-hypergeometric
monodromy. $A$-hypergeometric systems were introduced by Gel'fand,
Graev, Kapranov and Zelevinsky in the late twentieth
century~\cite{GG87,GKZ88,GKZ89} in order to provide a
uniform theory for multivariate hypergeometric functions
(see also~\cite{BMW19}), as well as a bridge to toric
geometry. Through this connection, important concepts such as
canonical series solutions~\cite{SST00} and holonomic rank~\cite{Ado94,Ber11,
  MMW05} can be described in combinatorial terms.

The combinatorial tractability of $A$-hypergeometric systems, combined
with existing deep results on monodromy of classical hypergeometric
functions (see, e.g., \cite{BH89, DM86}), gives hope of a correspondingly
rich theory of $A$-hypergeometric monodromy. However, the literature in this
direction is sparse (but, see~\cite{Beu16, Yos97}).
To understand why this is the case, we recall that,
by definition, the monodromy group of a system of differential
equations is a representation of the fundamental group of the
complement of its singular locus. The singular locus of an
$A$-hypergeometric system is the zero set of a polynomial called the
\emph{principal $A$-determinant} or \emph{full discriminant}. 
Geometrically, this is a union of
discriminantal hypersurfaces.
Computing the fundamental group of the complement of an algebraic
hypersurface is a deep and important question, which is challenging in the case of discriminantal hypersurfaces
 \cite{DL81, L09}. For principal $A$-determinants, we are not
aware of any general results in this direction, and this has
obstructed progress on $A$-hypergeometric monodromy. Consequently, our
first major goal is to prove a Zariski--Lefschetz-type theorem for principal
$A$-determinants.

To make this more precise,  recall that an $A$-hypergeometric system $H_A(\beta)$
is defined by a finite collection $A$
of algebraic characters of the torus $(\C^*)^{1+n}$, and a parameter
vector $\beta \in \C^{1+n}$. (See \S\ref{sec:Monodromy} for a precise definition.) 
Denote by $V_A \subset \C^A$ the singular locus of $H_A(\beta)$, which is independent of $\beta$. 
Here, $\C^A$ denotes the complex affine space of dimension $k=|A|$.

Removing a character from $A$ corresponds to restricting, in the space $\C^A$, 
to a coordinate hyperplane.
To study homotopy groups of complements
of embedded algebraic varieties through intersections with linear spaces 
is classical~\cite{HL73, Lef24, Zar37}, and numerous theorems exist
in the literature; in most instances with some smoothness assumption. 
However, the variety $V_A$ is highly singular.
Typically, its Whitney stratification has reducible non\-empty strata in
each codimension. The combination of a highly singular
variety and a coordinate hyperplane implies that standard Lefschetz and Zariski type 
theorems do not apply. 
%In fact, a generic line in the intersection
%of $V_{A}$ with a coordinate hyperplane
%does not contain a set of generators of the fundamental group.
As a first main result, we provide a combinatorial condition 
that allows us to add or remove characters and still control the effect on 
homotopy. 

Let $N$ denote the \emph{Newton polytope} (i.e., convex hull) of $A$.
We say that $A$ has an interior point if at least one element of $A$
is interior to $N$. For each face $\Gamma$ of $N$ the \emph{face lattice}
of $A$ relative $\Gamma$ is the affine lattice spanned by the elements of $A \cap \Gamma$. We define
the \emph{face saturation} $A^s$ of $A$ to be the largest subset 
of $N \cap \Z^{1+n}$ such that the face lattices of $A^s$ coincide with
the face lattices of $A$. (See Definition~\ref{def:saturations} for a precise statement).
Generalizing a Zariski-style theorem from~\cite{Bes01}, we conclude the following.

\begin{theorem}
\label{thm:MainFundamentalGroup}
Assume that $A$ has an interior point. Then,
the inclusion $\C^A \rightarrow \C^{A^s}$ given by appending zeros to $x$ for the coordinates corresponding
to the characters $A^s\setminus A$, induces a surjective morphism
\[
\eta\colon \pi_1(\C^{A}\setminus V_{A}, x) \rightarrow \pi_1\big(\C^{A^s}\setminus V_{A^s}, (x,0)\big).
\]
\end{theorem}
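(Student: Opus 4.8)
The plan is to realize the inclusion $\C^A \hookrightarrow \C^{A^s}$ as an iterated sequence of single-character extensions, reduce to the case $|A^s \setminus A| = 1$, and then apply a Zariski-type argument à la \cite{Bes01} to that single step. More precisely, order the characters of $A^s \setminus A$ as $a_1,\dots,a_r$ and set $A_0 = A$, $A_{i} = A_{i-1}\cup\{a_i\}$, so that $A_r = A^s$. The key combinatorial observation is that each intermediate $A_i$ still satisfies $A_i^s = A^s$ — adding an integer point of $N$ that does not change any face lattice cannot change the face saturation — and, crucially, each $A_i$ still has an interior point, since $A\subseteq A_i$ and the interior point of $A$ is not on any proper face of $N = \New(A_i)$. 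Thus it suffices to prove the theorem when $A^s = A\cup\{b\}$ for a single character $b$, and then compose the resulting surjections.

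For the single-step case, I would fix $A' = A\cup\{b\}$ and consider the projection $p\colon \C^{A'} = \C^A\times\C_b \to \C^A$ forgetting the $b$-coordinate, together with the section $s\colon x\mapsto (x,0)$. The goal is to show $s_*\colon \pi_1(\C^A\setminus V_A,x)\to\pi_1(\C^{A'}\setminus V_{A'},(x,0))$ is onto. The standard device (used in \cite{Bes01}) is the long exact sequence of the fibration obtained by restricting $p$ to $\C^{A'}\setminus V_{A'}$ over a suitable Zariski-open subset $U\subseteq \C^A\setminus V_A$. Over a generic point $x\in U$, the fiber is a line $\C_b$ with finitely many points removed — those $t$ for which $(x,t)\in V_{A'}$, i.e. the roots (in the $b$-variable) of the principal $A'$-determinant $E_{A'}$ specialized at $x$. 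The number of such punctures is constant over $U$, so $p$ restricts to a locally trivial fibration there, giving
\[
\pi_1(\text{fiber}) \to \pi_1\big(p^{-1}(U)\big) \to \pi_1(U) \to 1 .
\]
Surjectivity of $\pi_1(U)\to\pi_1(\C^A\setminus V_A)$ (codimension $\ge 2$ of the complement, standard) lets us replace $\pi_1(U)$ by $\pi_1(\C^A\setminus V_A)$; and surjectivity of $\pi_1(p^{-1}(U))\to\pi_1(\C^{A'}\setminus V_{A'})$ follows because $\C^{A'}\setminus V_{A'} \setminus p^{-1}(U)$ has complex codimension $\ge 1$ in a way that does not create new loops — more carefully, $p^{-1}(\C^A\setminus V_A\setminus U)$ together with $V_{A'}$ has codimension $\ge 2$ in $\C^{A'}$ away from $V_{A'}$. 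Then the section $s$ lands in $p^{-1}(U)$ (for $x\in U$) and splits the sequence on the right, so the composite $\pi_1(\C^A\setminus V_A)\xrightarrow{s_*}\pi_1(\C^{A'}\setminus V_{A'})$ is surjective exactly when the fiber loops are already realized — which they are, because each loop around a puncture $t_0$ in the fiber line bounds, in $\C^{A'}\setminus V_{A'}$, a disc swept out by moving $x$: the interior-point hypothesis forces $E_{A'}$ to genuinely depend on $x_b$ (it is not a pullback from $\C^A$), so the puncture locus is an honest hypersurface meeting the fiber transversally and the meridian is conjugate into $s_*\pi_1$. This last point is where the hypothesis "$A$ has an interior point" must be used: it guarantees, via the structure of the principal $A$-determinant (its Newton polytope is the secondary polytope, cf. the discriminant factorization into $E_A = \pm\prod_\Gamma \Delta_{A\cap\Gamma}^{\cdots}$), that adding $b$ in the interior changes $E_{A'}$ by a factor involving $x_b$ nontrivially.

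The main obstacle, and the step requiring the real work from the principal $A$-determinant theory developed earlier in the paper, is verifying that the fibration is well-behaved and that the meridians of the fiber punctures are killed in $\pi_1$ of the total complement — equivalently, that no genuinely new fundamental-group generator is introduced by the extra coordinate direction. This is precisely the failure point for naive Lefschetz/Zariski theorems flagged in the introduction (the variety $V_{A'}$ is badly singular and we are intersecting with a coordinate hyperplane, not a generic one), so the argument must go through the explicit combinatorial control of the components of $V_{A'}$ — the discriminantal hypersurfaces $\Delta_{A'\cap\Gamma}$ over faces $\Gamma$ of $N$ — and the observation that the face-saturation condition is exactly what makes these components, and hence the braiding of the punctures over $U$, the same for $A$ and for $A'$ on every proper face, with the only change occurring in the top-dimensional (interior) part, which is handled by the interior-point hypothesis. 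I expect the proof to proceed by an induction on $|A^s\setminus A|$ as above, with the base case occupying the bulk of the technical effort and relying on the Zariski--Lefschetz theorem for principal $A$-determinants that this paper establishes.
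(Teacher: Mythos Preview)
Your inductive reduction to the case $|A^s\setminus A|=1$ is exactly the paper's strategy (Theorem~\ref{thm:MainFundamentalGroup2} is proved by iterating Proposition~\ref{prop:InductionStep}), and your observation that each intermediate collection still has an interior point of $A$ available as an auxiliary point is the right combinatorial input.

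The gap is in your single-step argument. You fiber $p\colon \C^{A'}\setminus V_{A'}\to \C^A$ by forgetting the \emph{new} coordinate $y_b$, so the fiber is a $y_b$-line with punctures. The meridians of those punctures are goms around components of $V_{A'}$, and these are \emph{not} trivial in $\pi_1(\C^{A'}\setminus V_{A'})$; your claim that they ``bound a disc swept out by moving $x$'' is false in general. What you actually need is that each such meridian is homotopic to a loop lying in the slice $Y_b=\{y_b=0\}$, and nothing in your fibration detects that: the fiber is orthogonal to $Y_b$, not contained in it.

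The paper's mechanism (Proposition~\ref{Prop:FundamentalGroupMain}) avoids this by fibering in the direction of an \emph{auxiliary} variable $y_1$ coming from the interior point, not in the $y_b$-direction. Over the complement of the $y_1$-discriminant $K$, the map to $Y_1$ is a fiber bundle whose fibers are $y_1$-lines with $\deg_{y_1}\widehat E_A$ punctures; a gom around any component of $V_{A'}$ not contained in $\{K=0\}$ lies in such a fiber. The combinatorial hypotheses (that the interior point $\alpha_1$ lies on every face containing $b$, and that the multiplicities $m(A',\Gamma)$ agree with $m(A,\Gamma)$ for the relevant faces) are used in Proposition~\ref{pro:AZariski} to guarantee that $K|_{y_b=0}$ is nontrivial, so the fiber $y_1$-line can be chosen \emph{inside} $Y_b$. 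That is why the goms land in the image of $\eta$. Your sketch never identifies this auxiliary direction, and without it the argument does not close.
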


\smallskip
Throughout this paper, we say that a morphism between fundamental groups is \emph{canonical}
if it is induced by an inclusion of topological spaces, as in Theorem~\ref{thm:MainFundamentalGroup}.
An interesting question, which we do not address in detail, is whether the canonical morphism in Theorem~\ref{thm:MainFundamentalGroup} is an isomorphism. This can be deduced in
special cases, as in \S\ref{sec:Lines}.

\smallskip
With this result in hand, we return to $A$-hypergeometric
monodromy. The article~\cite{Beu16} describes an algorithm to
compute the monodromy group $\Mon_x(A, \beta)$ of an
$A$-hypergeometric system with parameter vector $\beta$ at the base
point $x\in \C^A$. However, this method is only applicable if the
collection $A$ satisfies some assumptions. The milder of these assumptions, that $A$ admits a 
so-called \emph{Mellin--Barnes basis}, fails already for relatively small collections $A$.
One can deduce from~\cite{For15, FJ15} that the set of all collections 
$A$ which admit a Mellin--Barnes basis is a \emph{semi-ideal} (or, \emph{downward closed set})
in the poset lattice of all collections with Newton polytope $N$. 
In other words, a suitable subcollection of $A$ can admit a Mellin--Barnes basis,
even if $A$ does not. Our main result supplements this semi-ideal property
by describing conditions under which the monodromy group is invariant under the 
actions of deleting (or adding) characters from $A$.

\begin{theorem}
\label{thm:MainMonodromy}
Assume that $A$ has an interior point.
If the parameter $\beta$ is sufficiently generic (i.e., \emph{nonresonant}; see Definition~\ref{def:nonresonant}), then 
\[
\Mon_{x}(A, \beta) \simeq \Mon_{(x,0)}(A^s, \beta).
\]
\end{theorem}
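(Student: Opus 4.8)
The plan is to deduce the monodromy isomorphism from the topological statement in Theorem~\ref{thm:MainFundamentalGroup} together with a comparison of the $A$-hypergeometric $D$-modules on $\C^A$ and on $\C^{A^s}$. The monodromy group $\Mon_x(A,\beta)$ is, by definition, the image of the monodromy representation $\rho_A\colon \pi_1(\C^A\setminus V_A, x)\to \mathrm{GL}(\mathrm{Sol}_x H_A(\beta))$ on the stalk of the solution sheaf at $x$; likewise for $A^s$ at $(x,0)$. So it suffices to produce a commuting square in which the canonical surjection $\eta$ of Theorem~\ref{thm:MainFundamentalGroup} intertwines $\rho_A$ and $\rho_{A^s}$ up to an identification of solution spaces, and then check that the induced map on images is an isomorphism.

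The first step is to compare the solution spaces. Under nonresonance, $H_A(\beta)$ and $H_{A^s}(\beta)$ have the same holonomic rank, equal to the normalized volume $\vol(N)$, since $A$ and $A^s$ share the Newton polytope $N$ and the holonomic rank is $\vol(N)$ for nonresonant $\beta$ by the results of \cite{Ado94, MMW05}. Restriction of $D$-modules along the linear subspace $\C^A\hookrightarrow\C^{A^s}$ sends the $GKZ$-system $H_{A^s}(\beta)$ to a system containing $H_A(\beta)$; one shows using the combinatorial control on the face lattices built into the definition of $A^s$ (Definition~\ref{def:saturations}) that this restriction is in fact $H_A(\beta)$, and that the restriction is non-characteristic away from $V_A$, i.e. the inclusion $\C^A\setminus V_A\hookrightarrow \C^{A^s}\setminus V_{A^s}$ meets the characteristic variety transversally. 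Non-characteristic restriction then furnishes a canonical isomorphism $\mathrm{Sol}_x H_A(\beta)\xrightarrow{\ \sim\ }\mathrm{Sol}_{(x,0)}H_{A^s}(\beta)$ of stalks, compatible with the local systems, i.e. equivariant for $\eta$. This yields the commuting square
\[
\begin{tikzcd}
\pi_1(\C^A\setminus V_A,x) \arrow[r,"\eta",twoheadrightarrow] \arrow[d,"\rho_A"'] & \pi_1(\C^{A^s}\setminus V_{A^s},(x,0)) \arrow[d,"\rho_{A^s}"] \\
\mathrm{GL}(\mathrm{Sol}_x H_A(\beta)) \arrow[r,"\sim"] & \mathrm{GL}(\mathrm{Sol}_{(x,0)}H_{A^s}(\beta)).
\end{tikzcd}
\]
Since $\eta$ is surjective (Theorem~\ref{thm:MainFundamentalGroup}) and the bottom arrow is an isomorphism of groups, the images coincide: $\Mon_x(A,\beta)=\rho_A(\pi_1)\xrightarrow{\sim}\rho_{A^s}(\eta(\pi_1))=\rho_{A^s}(\pi_1)=\Mon_{(x,0)}(A^s,\beta)$.

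The main obstacle I expect is the $D$-module comparison, specifically verifying that the restriction of $H_{A^s}(\beta)$ to $\C^A$ is non-characteristic along $\C^A\setminus V_A$ and recovers $H_A(\beta)$ exactly. The subtlety is that $V_{A^s}$ is badly singular (as emphasized in the introduction), so transversality of the coordinate subspace to the characteristic variety is not automatic and must be extracted from the combinatorics of face saturation — essentially the same combinatorial input that drives Theorem~\ref{thm:MainFundamentalGroup}. One route is to use the known description of the characteristic variety of a $GKZ$-system via the conormals to the orbit closures of the torus action associated to the faces of $N$ (as in \cite{SST00, MMW05}); because $A$ and $A^s$ determine the same face lattices, these conormal components match up, and the coordinate subspace $\C^A$ is disjoint from the problematic part of the characteristic variety precisely over the complement of $V_A$. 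An alternative, more elementary route — valid when $A$ admits a suitable basis of series solutions — is to compare canonical series solutions directly: nonresonance guarantees a full basis of $\Gamma$-series solutions indexed by the same combinatorial data for both $A$ and $A^s$, the series for $A$ being obtained from those for $A^s$ by setting the extra coordinates to zero, and one checks this specialization is a bijection on bases that intertwines analytic continuation. Either way, the nonresonance hypothesis is used exactly to guarantee equality of holonomic ranks and the existence of matching solution bases, so that the map on solution spaces is an isomorphism rather than merely injective.
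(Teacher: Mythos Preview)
Your high-level strategy matches the paper's: build a commuting square with the surjection $\eta$ on fundamental groups and an isomorphism of solution stalks compatible with $\eta$, then read off equality of images. The difference lies entirely in how the solution space isomorphism is established.

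The paper works inductively, removing one lattice-redundant character at a time, and for each step proves directly (Theorem~\ref{thm:SolutionSpaceIsomorphism}) that the evaluation map $\Psi\colon F\mapsto F(y_1,\dots,y_{k-1},0)$ is an isomorphism $\Sol_{(x,0)}(A,\beta)\to\Sol_x(A_k,\beta)$. The key is an explicit inverse: given $\psi\in\Sol_x(A_k,\beta)$, one sets $F=\sum_{\ell\geq 0}\frac{y_k^\ell}{\ell!}\,\psi_\ell$, where $\psi_\ell=\del^{-\bar u}\psi$ for any $u\in\ker A$ with $u_k=\ell$, using the anti-derivative operators of Lemma~\ref{lemma:derivativeIso} (well-defined exactly because $\beta$ is nonresonant). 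One checks $F$ is a formal solution of $H_A(\beta)$ and then invokes regular holonomicity for convergence. This avoids both the non-characteristic restriction machinery and any choice of $\Gamma$-series basis.

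Your Route~1 is plausible, but the step you flag---that the $\cD$-module restriction of $H_{A^s}(\beta)$ along $\C^A$ is exactly $H_A(\beta)$---is not automatic and would need its own argument; the paper never needs this identification. Your Route~2 is closer in spirit, but is more delicate than you suggest: $\Gamma$-series for $A^s$ are indexed by triangulations of $A^s$, which may use the added points as vertices, so matching bases requires restricting to the facet $\Sigma_{A_k}\subset\Sigma_A$ of the secondary polytope (Lemma~\ref{lem:SecondaryPolytopeRestriction}) and checking that the specialization $y_k\to 0$ behaves well on those particular series. The paper's anti-derivative construction sidesteps all of this bookkeeping and gives the inverse of $\Psi$ for free.
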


The genericity assumption on the parameter $\beta$ cannot be
removed, since it is known to characterize $A$-hypergeometric systems
with irreducible monodromy representations \cite{Beu11, SW12}.
Without the genericity assumption, it might even be that the dimensions of the solution spaces
differ \cite{CDD99, SST00}.

Finally, we remark that we prove stronger versions of Theorems~\ref{thm:MainFundamentalGroup} 
and \ref{thm:MainMonodromy}, which apply also in the situation
when $A$ has no interior points. The necessary definitions are, naturally, more technical, and we
have saved the details for Definition~\ref{def:saturations} and Theorems~\ref{thm:MainFundamentalGroup2} and \ref{thm:MainMonodromy2}.

\subsection*{Outline.} 
Section~\ref{sec:Notation} sets notation and reviews necessary background.
Section~\ref{sec:FundamentalGroups} begins our study of fundamental
groups. Section~\ref{sec:Saturations} links these results to the
combinatorics of the collection $A$, and contains the proof of
Theorem~\ref{thm:MainFundamentalGroup}. We turn to $A$-hypergeometric
monodromy and prove Theorem~\ref{thm:MainMonodromy} in
Section~\ref{sec:Monodromy}. Finally, we apply these results and
Beuker's method to compute the monodromy groups for $A$-hypergeometric
systems associated to monomial curves in Section~\ref{sec:Lines}.

\subsection*{Acknowledgements}
We cordially thank Michael L{\"o}nne and Frits Beukers for enlightening discussions.
JF gratefully acknowledges the support of the Netherlands Organization for Scientific Research (NWO),
grant TOP1EW.15.313.
%%%%%%%%%%%%%%%%%%%%%%%%%%%
\section{Preliminaries}
%%%%%%%%%%%%%%%%%%%%%%%%%%%
\label{sec:Notation}

We use, with slight adjustments, 
the notation of \cite[Chapter~10]{GKZ94}.

Throughout this article,  $A = \{\alpha_1, \dots, \alpha_k\} \subset \Z^{1+n}$ denotes
a quasi-homogeneous collection of algebraic characters of the torus
$(\C^*)^{1+n}$, and $\beta\in \C^{1+n}$ is a parameter vector. 
We often write $A$ as a matrix whose columns are the characters $\alpha_i$.
We
denote by $X_A$ the toric variety associated to $A$, with $\Z_A$ its
character lattice, and we let $N = \conv(A) \subset \R \otimes \Z_A$
denote the corresponding Newton polytope. The collection $A$ is said to be \emph{saturated} 
if $A = N \cap \Z_A$. However, we impose no such assumption.
Let $\N_A = \N[A]$ denote the monoid generated by $A$ and the origin,
so that $\Z_A$ is the group completion of $\N_A$. Given an affine lattice $L$, the 
\emph{lattice volume} defined by $L$ is the unique translation
invariant measure on $\R \otimes L$ such that a minimal simplex in $L$ has volume one.

\smallskip
Let $\cF$ denote the face poset lattice of $N$, and let $\cF_{\intt} \subset \cF$
denote the semi-ideal generated by all faces $\Gamma \preccurlyeq N$ that 
contain a relative interior point in $A$. 
Given $\Gamma \in \cF$, let $\Gamma_\R$ denote the linear span of $\Gamma$ and the origin.
That is, $\Gamma_\R$ is the linear span of the cone generated by the face $\Gamma \preccurlyeq N$.
(Some authors prefer to consider the cone over $N$ rather than $N$;
the face lattices of the two coincide except for the apex of the
cone.) 

Define the \emph{index} 
\[
i(A, \Gamma) = \big[\Z_A\cap \Gamma_\R \, : \, \Z_{A \cap \Gamma}\big].
\]
Set $\Z_A/\Gamma = \Z_A/(\Z_A \cap \Gamma_\R)$, 
and consider the admissible semigroup
\begin{equation}
\label{eqn:SubdiagramVolume}
\N_A/\Gamma = (\Z_{A \cap \Gamma} + \N_A) \big/ (\Z_A \cap \Gamma_\R) \subset \Z_A/\Gamma.
\end{equation}
Let $v(A, \Gamma)$ denote the \emph{subdiagram volume} of $\N_A/\Gamma$.
That is, $v(A, \Gamma)$ denotes the lattice volume of the set difference between the convex hulls of $\N_A/\Gamma$ and $(\N_A/\Gamma)^* = (\N_A/\Gamma)\setminus \{0\}$ in
$ \R\otimes \Z_A/\Gamma$.
By convention, the subdiagram volume of the trivial semigroup is one.

\smallskip
Let $z=(z_0,\dots,z_n)$ be coordinates on the torus $(\C^*)^{1+n}$. Given $\alpha_i \in \Z^{1+n}$,
the associated character is the monomial $z \mapsto z^{\alpha_i}$. We
use $\C^A$ to denote the space of polynomials
\[
  \C^A=\big\{ y_1z^{\alpha_1}+\cdots +y_kz^{\alpha_k} \mid y_1,\dots, y_k
  \in \C\big\}.
\]
That is, $\C^A$ is complex vector space of dimension $k$ with
coordinates $y = (y_1,\dots,y_k)$.

The collection $A$ defines a projective toric variety $X_A \subset \P(\C^k)$.
Let $\P(\C^A)$ denote the dual space of $\P(\C^k)$. 
The \emph{$A$-discriminantal variety} is the projectively dual
\begin{equation}
\label{eqn:ADiscriminant}
\widecheck{X}_A \subset \P(\C^A).
\end{equation}
The collection $A$ is said to be \emph{nondefective} if the $A$-discriminant is a hypersurface, in which case
we denote by $D_A$ its defining homogeneous polynomial (unique up to sign, if the coefficients are required to be relatively prime integers). If $A$ is defective, then $D_A = 1$.
The affine cone $\widecheck{X}_A \subset \C^A$
is the closure of the rational locus of all polynomials
$f \in (\C^*)^A$ which has a singularity in $(\C^*)^{1+n}$.

Following~\cite{GKZ94}, the \emph{principal $A$-determinant} is defined as the toric resultant
\begin{equation}
\label{eqn:PrincipalADeterminant}
E_A(f)= R_A\Big( z_0 \frac{\partial f}{\partial z_0}, \dots, z_n \frac{\partial f}{\partial z_n}\Big). 
\end{equation}
We make use of the formula \cite[Ch.~10, Thm.~1.2]{GKZ94}, up to a nonzero constant,
\begin{equation}
\label{eqn:PrincipalADeterminant2}
E_A(f) = \prod_{\Gamma \in \cF} D_{A \cap \Gamma}(x)^{m(A, \Gamma)} ,
\end{equation}
where the multiplicities $m(A, \Gamma)$ are given by
\begin{equation}
\label{eqn:multiplicities}
m(A, \Gamma) = i(A, \Gamma)\, v(A, \Gamma).
\end{equation}
Notice that $m(A, N) = 1$ and that  $m(A, \Gamma) \geq 1$  for all $\Gamma \in \cF$. As fundamental groups are topological rather than algebraic, we often replace the principal $A$-determinant with the reduced polynomial
\begin{equation}
\label{eqn:reduced}
\widehat E_A(x) =  \prod_{\Gamma \in \cF} D_{A \cap \Gamma}(x).
\end{equation}

\begin{lemma}
\label{lem:PADRestriction}
Assume that $\alpha_i \in A$ is not a vertex of $N$. 
Set $Y_i = \{\,x \in \C^A\, | \, x_i = 0\,\}$ and $A_i = A\, \setminus \{\alpha_i\}$.
Then, each irreducible component of\/ $V_A \cap Y_i$ is contained in $V_{A_i}$.
\end{lemma}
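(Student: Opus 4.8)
The key observation is the factorization~\eqref{eqn:PrincipalADeterminant2}, together with the description of $V_A$ as the zero locus of $\widehat E_A$, i.e., as the union over faces $\Gamma \preccurlyeq N$ of the discriminantal hypersurfaces $\{D_{A \cap \Gamma} = 0\}$. So $V_A \cap Y_i$ is the union, over all faces $\Gamma$, of $\{D_{A\cap\Gamma}=0\}\cap Y_i$, and it suffices to show that each irreducible component of $\{D_{A \cap \Gamma} = 0\} \cap Y_i$ lies in some discriminantal hypersurface $\{D_{A_i \cap \Gamma'} = 0\}$ associated to a face $\Gamma'$ of $N' = \conv(A_i)$. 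The plan is to split into cases according to the position of $\alpha_i$ relative to the face $\Gamma$.

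First, the easy case: if $\alpha_i \notin \Gamma$, then $A \cap \Gamma = A_i \cap \Gamma$ (and $\Gamma$ is still a face of $N'$, since removing a non-vertex point that is not on $\Gamma$ does not change $\Gamma$ as a face), so $D_{A\cap\Gamma} = D_{A_i \cap \Gamma}$ as polynomials, and moreover $D_{A\cap\Gamma}$ does not involve the variable $x_i$; hence $\{D_{A\cap\Gamma}=0\}\cap Y_i = \{D_{A_i\cap\Gamma}=0\}\cap Y_i \subset V_{A_i}$. Second, the case $\alpha_i \in \Gamma$ with $\alpha_i$ not a vertex of $\Gamma$: then, intersecting the discriminantal hypersurface $\{D_{A\cap\Gamma}=0\}$ with the coordinate hyperplane $Y_i = \{x_i = 0\}$ amounts precisely to setting the coefficient of a non-extremal monomial to zero. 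Here I invoke the specialization behavior of $A$-discriminants under passing to a facet/truncation: by the standard result on restriction of discriminants to coordinate subspaces (see \cite[Ch.~9-10]{GKZ94}, or the biduality/Cayley trick description of $\widecheck X_A$), the polynomial $D_{A\cap\Gamma}(x)|_{x_i=0}$ vanishes only on the union of $\widecheck X_{(A\cap\Gamma)_i}$ and the discriminantal loci coming from faces $\Gamma'$ of $\conv((A\cap\Gamma)_i)$ — equivalently, $V_{A\cap\Gamma}\cap Y_i \subseteq V_{(A\cap\Gamma)_i}$, and since $(A\cap\Gamma)_i = A_i \cap \Gamma$ and every face of $\conv(A_i\cap\Gamma)$ extends to a face of $N'$, this is contained in $V_{A_i}$. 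Iterating (or applying the same statement to the face $\Gamma$ directly) handles this case.

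The genuinely delicate case is $\alpha_i \in \Gamma$ with $\alpha_i$ a \emph{vertex} of $\Gamma$ (but, by hypothesis, not a vertex of $N$). Removing $\alpha_i$ changes the face structure: $\Gamma$ may cease to be a face of $N'$, and new faces of $N'$ appear. The plan is to observe that the irreducible components of $\{D_{A\cap\Gamma}=0\}\cap Y_i$ are the components of the locus of polynomials supported on $A\cap\Gamma$, with $x_i = 0$, having a singular point in the torus; such a polynomial is actually supported on $(A\cap\Gamma)\setminus\{\alpha_i\} = A_i \cap \Gamma$. So every component of $\{D_{A\cap\Gamma}=0\}\cap Y_i$ consists of polynomials $f$ supported on $A_i\cap\Gamma$ with a torus singularity, which means it lies in $\widecheck X_{A_i\cap\Gamma}$ if the latter is a hypersurface; more carefully, $\conv(A_i\cap\Gamma)$ is a (possibly lower-dimensional) polytope each of whose faces is $\Gamma'' \cap \conv(A_i\cap\Gamma)$ for a face $\Gamma''$ of $N'$, and $V_{A_i\cap\Gamma} \subseteq V_{A_i}$ by the face factorization~\eqref{eqn:PrincipalADeterminant2} applied to $A_i$. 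The one subtlety to check is dimension/defectivity: if $\conv(A_i\cap\Gamma)$ drops dimension or $A_i\cap\Gamma$ becomes defective, one must still argue the component is contained in $V_{A_i}$, which follows because a component of positive codimension in $Y_i$ consisting of singular polynomials supported on a sublattice is contained in the discriminant of that sublattice's saturation, hence in $V_{A_i}$ after identifying it with a face-discriminant of $A_i$.

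I expect the main obstacle to be the bookkeeping in this last case: correctly matching faces of $N' = \conv(A_i)$ with the sub-supports that arise, and ruling out that an irreducible component of $V_A \cap Y_i$ could be an ``accidental'' component (e.g.\ coming from the intersection of two different discriminantal hypersurfaces $\{D_{A\cap\Gamma}=0\}$ and $\{D_{A\cap\Gamma'}=0\}$ with $Y_i$) that is not contained in any single $V_{A_i}$-component — but since $V_{A_i}$ is itself a union of hypersurfaces and we only need \emph{containment} of sets (not a matching of components), this is not actually a problem: it suffices that the underlying set $V_A\cap Y_i$ is contained in the set $V_{A_i}$, and that follows from the face-by-face analysis above.
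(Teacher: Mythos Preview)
Your plan is workable but substantially more elaborate than needed, and it contains a misconception about polytope combinatorics that inflates the apparent difficulty.

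The ``genuinely delicate'' Case~3 is empty. If $\Gamma \preccurlyeq N$ is a face and $\alpha_i$ is a vertex of $\Gamma$, then $\alpha_i$ is a vertex of $N$, since a vertex of a face of a polytope is a vertex of the polytope. The hypothesis of the lemma is precisely that $\alpha_i$ is \emph{not} a vertex of $N$, so this case never arises. In particular $\conv(A_i) = N$ with the identical face lattice, and all the anticipated bookkeeping about ``new faces of $N'$'' and changes in the face structure evaporates.

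With Case~3 removed, your argument reduces to the trivial Case~1 plus Case~2, which is really an induction on $|A|$ applied to the subconfiguration $A \cap \Gamma$. You should state it as such rather than gesture at an unnamed ``standard result'' in \cite{GKZ94}: what you actually invoke is the lemma itself for the smaller configuration $A \cap \Gamma$, and its hypothesis (that $\alpha_i$ is not a vertex of $\conv(A\cap\Gamma) = \Gamma$) holds by the observation just made.

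The paper takes an entirely different and much shorter route, using the resultant definition~\eqref{eqn:PrincipalADeterminant} rather than the factorization~\eqref{eqn:PrincipalADeterminant2}. Since $\alpha_i$ is not a vertex, $Y_i \not\subset V_A$, so $E_A|_{Y_i}$ is a nontrivial polynomial; and for $f \in Y_i$ the expression $R_A(z_0\partial_0 f,\dots,z_n\partial_n f)$ already detects membership in $V_{A_i}$. That is a three-line proof. What your longer route buys is explicit visibility into which face-discriminant $D_{A_i\cap\Gamma}$ absorbs each component of $V_A \cap Y_i$---information the paper does need later, in the proof of Proposition~\ref{pro:AZariski}, but obtains there by a separate degree-counting argument rather than by the geometric case analysis you sketch.
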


\begin{proof}
Since $\alpha_i$ is not a vertex of $N$, the hyperplane $Y_i$ is not contained in $V_A$.
It follows that the restriction of $E_A$ to $Y_i$ is nontrivial. The statement then follows
from \eqref{eqn:PrincipalADeterminant}.
\end{proof}
In particular, there is an identity of sets $V_A \cap Y_i = V_{A_i}$.
However, the irreducible components of $V_A \cap Y_i$
need not appear with the same multiplicity in $V_{A_i}$. In practice, tracing how the multiplicities of irreducible components
in $V_A$ change when we restrict to coordinate hyperplanes is a central part of our investigation.

\smallskip
Consider a (regular) triangulation $T$ of the Newton polytope $N$, with vertices in $A$.
We express $T$ as the set of full-dimensional cells $\sigma$.
Consider the characteristic function $\varphi_T\colon A \rightarrow \Z$
defined by
\[
\varphi_T(\alpha) = \sum_{\sigma \in T \, | \, \alpha \in \operatorname{vert}(\sigma)} \, \operatorname{vol}_{\Z_A}(\sigma).
\]
That is, $\varphi_T(\alpha)$ is the sum of the lattice volumes of all simplices in $T$ containing $\alpha$
as a vertex.
The \emph{secondary polytope} $\Sigma_A$ is defined as the convex hull of the vectors 
\[
\varphi_T(A)= (\varphi_T(\alpha_1), \dots, \varphi_T(\alpha_k)) \in \Z^k
\]
as $T$ ranges over all (regular) triangulations
of $A$ \cite[Ch.~7]{GKZ94}. The secondary polytope coincides with the Newton polytope
of the principal $A$-determinant $E_A$ \cite[Ch.~10, Thm.~1.4]{GKZ94}. We make the following remark,
where we use coordinates $u$ for $\Z^k$.

\begin{lemma}
\label{lem:SecondaryPolytopeRestriction}
Assume that $\alpha_i \in A$ is not a vertex of $N$.  If\/ $\Z_A = \Z_{A_i}$, then the secondary polytope $\Sigma_{A_i}$ coincides with the facet of\/ $\Sigma_A$ contained in the hyperplane $u_i  = 0$.
\end{lemma}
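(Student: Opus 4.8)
The plan is to compare the two polytopes combinatorially via their vertex descriptions in terms of regular triangulations, using the classical interpretation of the secondary polytope from \cite[Ch.~7]{GKZ94}. First I would recall that the vertices of $\Sigma_A$ are exactly the vectors $\varphi_T(A)$ for $T$ a regular triangulation of $A$, and that a facet of $\Sigma_A$ corresponds to a regular polyhedral subdivision of $A$; the facet contained in $\{u_i = 0\}$ is the one on which $\varphi_T(\alpha_i)$ is minimized. Since $\varphi_T(\alpha_i) \geq 0$ always, with equality precisely when $\alpha_i$ is not a vertex of any cell of $T$, this facet is the convex hull of the vectors $\varphi_T(A)$ as $T$ ranges over regular triangulations of $N$ that use no cell having $\alpha_i$ as a vertex---equivalently, over regular triangulations of $N$ whose vertex set avoids $\alpha_i$. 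Because $\alpha_i$ is not a vertex of $N$, every such triangulation is genuinely a triangulation of $N$ with vertices in $A_i = A \setminus \{\alpha_i\}$, and conversely every regular triangulation of $A_i$ extends (without modification) to one of $A$ avoiding $\alpha_i$; so the index sets of vertices match up.

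Second, I would check that the characteristic functions agree. For a triangulation $T$ of $N$ with vertices in $A_i$, the formula for $\varphi_T(\alpha)$ at $\alpha \in A_i$ is the sum of lattice volumes of cells of $T$ containing $\alpha$ as a vertex; this is identical whether we compute it with respect to the lattice $\Z_A$ or $\Z_{A_i}$, and here the hypothesis $\Z_A = \Z_{A_i}$ guarantees the two lattice-volume normalizations coincide, so $\operatorname{vol}_{\Z_A}(\sigma) = \operatorname{vol}_{\Z_{A_i}}(\sigma)$ for every cell $\sigma$. Hence, after deleting the $i$-th coordinate (which is identically zero on this facet), the vertex $\varphi_T(A) \in \Z^k$ of the facet of $\Sigma_A$ is carried exactly to the vertex $\varphi_T(A_i) \in \Z^{k-1}$ of $\Sigma_{A_i}$. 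Since a polytope is the convex hull of its vertices and we have a bijection on vertex sets under the coordinate projection, the facet of $\Sigma_A$ in $\{u_i = 0\}$ and $\Sigma_{A_i}$ coincide.

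The main obstacle is the regularity bookkeeping: one must make sure that a regular triangulation of $A$ \emph{avoiding} $\alpha_i$ is the same data as a regular triangulation of $A_i$, i.e.\ that imposing or dropping the point $\alpha_i$ from the point configuration does not change which subdivisions are regular once $\alpha_i$ is unused. This follows because a height function on $A$ whose lower hull induces a triangulation omitting $\alpha_i$ restricts to a height function on $A_i$ inducing the same triangulation, and conversely any height function on $A_i$ can be extended to $A$ by assigning $\alpha_i$ a sufficiently large height so that it stays strictly above the lower hull; the latter is possible precisely because $\alpha_i \in N = \conv(A_i)$ lies in the domain of the piecewise-linear lower-hull function. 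I would also note that the statement that the minimizing face of the linear functional $u \mapsto u_i$ on $\Sigma_A$ is itself the secondary polytope of the induced subdivision is \cite[Ch.~7, Thm.~1.17]{GKZ94}; invoking it directly shortens the argument to a check that the induced subdivision here is the trivial one (the whole polytope $N$, since $\alpha_i$ being an interior point to some face forces no lower-dimensional structure), after which the recursion on the secondary polytope of a single cell $N$ with marked points $A_i$ is exactly $\Sigma_{A_i}$.
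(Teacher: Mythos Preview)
Your proposal is correct and follows essentially the same approach as the paper: both argue that the vertices of the facet $\{u_i=0\}$ of $\Sigma_A$ are the $\varphi_T(A)$ for regular triangulations $T$ omitting $\alpha_i$, and that the hypothesis $\Z_A=\Z_{A_i}$ makes the lattice volumes agree. The paper's proof is a two-sentence sketch that omits the regularity bookkeeping you spell out (extending and restricting height functions), so your version is a strictly more detailed execution of the same idea.
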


\begin{proof}
Since $\Z_{A_i} = \Z_A$, they induce the same lattice volume. Hence, it suffices to note that a regular triangulation $T$ of $N$, with vertices in $A$, is such that $u = \varphi_T(A)$ has $u_i = 0$ 
if and only if $\alpha_i$ is not a vertex of any simplex in $T$.
\end{proof}

%%%%%%%%%%%%%%%%%%%%%%%%%%%
\section{Fundamental Groups}
%%%%%%%%%%%%%%%%%%%%%%%%%%%
\label{sec:FundamentalGroups}

Throughout this section, let us consider a general polynomial $P \in \C[y_1, \dots, y_k]$.
By slight abuse of notation, we denote by $V \subset \C^k$ both the vanishing
locus of $P$ and the set of irreducible components of $V$. 
An irreducible hypersurface in $V$ will be denoted by a capital letter,
and points in $V$ will be denoted by lowercase letters. We use $y$ as coordinates,
and denote the base point of the fundamental group by $x$.

Let $Z \in V$ be an irreducible component, and choose a smooth point $y \in Z$. 
For a generic line $\ell$ passing through $Z$,
and a sufficiently small open neighborhood $U$ of $y$,
the complement $(U\cap\ell) \setminus Z$ is a punctured disc.
Choose an auxiliary point $\hat x \in (U\cap\ell) \setminus Z$, choose a generator
$\hat \gamma$ of $\pi_1\big((U\cap\ell) \setminus Z, \hat x\big) \simeq \Z$,
and choose a path $\rho$ from $x$ to $\hat x$ in $\C^k \,\setminus V$.
Then, the \emph{generator-of-the-monodromy} (gom) of $\pi_1(\C^k\, \setminus V, x)$,
around $Z$ and determined by the above choices, is the path
\[
\gamma = \rho^{-1} \circ \hat \gamma \circ \rho.
\]
The nomenclature is self-explanatory; it is well known that the set of goms around all irreducible components $Z \in V$ generates the fundamental group $\pi_1(\C^k\,\setminus V, x)$. The following lemma
is also classical.

\begin{lemma}[See, e.g., {\cite[Lem.~2.1]{Bes01}}]
\label{lem:BessisLemma}
Let $V_1$ and $V_2$ be two disjoint families of irreducible hypersurfaces in $\C^k$, 
and choose $x \in \C^k\,\setminus (V_1 \cup V_2)$. 
Then, the canonical homomorphism
\[
\eta\colon \pi_1\big(\C^k\,\setminus (V_1\cup V_2), x\big) \rightarrow \pi_1\big(\C^k\,\setminus V_1, x\big)
\]
is surjective and, more precisely:
\begin{enumerate}
\item Each gom of $\pi_1\big(\C^k\,\setminus V_1, x\big)$
lifts to a gom of $\pi_1\big(\C^k\,\setminus (V_1\cup V_2), x\big)$.
\label{item:LemmaPart1}
\item The kernel of $\eta$ is generated by the goms around components of $V_2$.
\label{item:LemmaPart2} \qed
\end{enumerate}
\end{lemma}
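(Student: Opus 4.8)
The plan is to prove Lemma~\ref{lem:BessisLemma} by a stratification/transversality argument reducing to the local picture near $V_2$, combined with the van Kampen-type fact that goms generate the fundamental group of a hypersurface complement. Throughout, write $V_1 = \bigcup_i Z_i$ and $V_2 = \bigcup_j W_j$ for the irreducible decompositions, and note that since $V_1$ and $V_2$ are disjoint \emph{families} of hypersurfaces, no $Z_i$ equals any $W_j$; in particular $V_1 \cup V_2$ is again a hypersurface and every $Z_i$, $W_j$ is an irreducible component of it. The inclusion $\C^k \setminus (V_1 \cup V_2) \hookrightarrow \C^k \setminus V_1$ induces $\eta$, and all three claims (surjectivity, part~\eqref{item:LemmaPart1}, part~\eqref{item:LemmaPart2}) are really statements about this inclusion.

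First I would establish surjectivity together with part~\eqref{item:LemmaPart1} in one stroke. Since $\pi_1(\C^k \setminus V_1, x)$ is generated by goms around the components $Z_i$, it suffices to show each such gom lifts. Fix a gom $\gamma = \rho^{-1} \circ \hat\gamma \circ \rho$ around $Z_i$, built from a smooth point $y \in Z_i$, a generic line $\ell$, a small neighborhood $U$, an auxiliary point $\hat x \in (U \cap \ell) \setminus Z_i$, a generator $\hat\gamma$, and a path $\rho$ from $x$ to $\hat x$ in $\C^k \setminus V_1$. The point is that a \emph{generic} choice of $y$, $\ell$, $U$ and $\rho$ can be made to avoid $V_2$ as well: genericity of $y \in Z_i$ keeps it off the (lower-dimensional) set $Z_i \cap V_2$, so we may assume $y \notin V_2$ and hence $U$ is disjoint from $V_2$; and since $\C^k \setminus (V_1 \cup V_2)$ is connected and open in $\C^k \setminus V_1$, we may homotope $\rho$ (rel endpoints, inside $\C^k \setminus V_1$) to a path avoiding $V_2$, using that a path can be perturbed off a complex-codimension-$\geq 1$ subvariety. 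Then $\gamma$ literally lives in $\C^k \setminus (V_1 \cup V_2)$ and is a gom there around $Z_i$. This gives both~\eqref{item:LemmaPart1} and surjectivity of $\eta$.

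For part~\eqref{item:LemmaPart2}, let $K \leq \pi_1(\C^k \setminus (V_1 \cup V_2), x)$ be the normal subgroup generated by the goms around the components $W_j$ of $V_2$; clearly $K \subseteq \ker\eta$ since each such gom, being a small loop around a piece of $V_2$ that is not in $V_1$, bounds a disc in $\C^k \setminus V_1$ (the disc $U \cap \ell$ around the chosen smooth point $y \in W_j$, which meets $V_1$ nowhere if $y$ is chosen generically, in particular off $V_1 \cap W_j$). For the reverse inclusion $\ker\eta \subseteq K$, I would pass to the quotient: the standard description (van Kampen / Zariski--Lefschetz for hypersurface complements; see e.g.\ the references already cited, or Dimca's book) shows that $\pi_1(\C^k \setminus V_1, x)$ is obtained from $\pi_1(\C^k \setminus (V_1 \cup V_2), x)$ precisely by adding, as relations, the goms around the components of $V_2$ — indeed, killing those goms ``fills back in'' the removed hypersurface $V_2$ up to homotopy of the complement. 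More concretely, one argues that the natural map $\pi_1(\C^k \setminus (V_1 \cup V_2), x)/K \to \pi_1(\C^k \setminus V_1, x)$ is injective by exhibiting a section, or by a Mayer--Vietoris / excision argument tubular-neighborhood style: a regular neighborhood $T$ of $V_2^{\mathrm{sm}} \setminus V_1$ has the property that $\C^k \setminus V_1$ deformation retracts onto $(\C^k \setminus (V_1 \cup V_2)) \cup T$, and $\pi_1$ of the latter, computed by van Kampen along the circle-bundle $T \setminus V_2$, is exactly the quotient by the meridians, i.e.\ by $K$. Care is needed at the singular locus $V_2^{\mathrm{sing}}$ and along $V_1 \cap V_2$, but these are complex codimension $\geq 1$ inside $V_2$ and can be avoided when choosing the generic transversal discs and when homotoping loops.

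The main obstacle is the injectivity half of~\eqref{item:LemmaPart2}: showing $\ker\eta$ is not \emph{larger} than $K$. Surjectivity and~\eqref{item:LemmaPart1} are soft (perturbation off a subvariety), but the statement that the only new relations created by removing $V_2$ are the meridian relations is the genuine content — it is the ``Zariski-in-reverse'' direction and requires either a careful tubular-neighborhood/van Kampen computation handling the singularities of $V_2$ and the incidence with $V_1$, or a direct homotopy argument: given a loop $\gamma$ in $\C^k \setminus (V_1 \cup V_2)$ that bounds a disc $D$ in $\C^k \setminus V_1$, one makes $D$ transverse to $V_2$ (possible since $\mathrm{codim}_\C V_2 = 1$, so $D \cap V_2$ is a finite set of points in the interior of $D$, none on $V_1$), and then $[\gamma]$ is a product of conjugates of meridians of $V_2$, one for each intersection point — this is where the bulk of the work lies and where one must invoke that $V_2 \setminus V_1$ is smooth away from a set that $D$ can be pushed off of.
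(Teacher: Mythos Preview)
The paper does not prove this lemma at all: it is stated with a citation to \cite[Lem.~2.1]{Bes01} and closed with a \qed, so there is no ``paper's own proof'' to compare against. Your argument is the standard one and is correct in outline; it is essentially what one finds in Bessis (and in textbook treatments of hypersurface complements).

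A couple of minor remarks. For part~\eqref{item:LemmaPart1}, your perturbation argument is fine, but note that you do not even need to homotope $\rho$: since a gom is \emph{defined} only up to the choices of $y$, $\ell$, $U$, $\hat x$, and $\rho$, you may simply make those choices generically from the start so that everything avoids $V_2$; the resulting loop is then simultaneously a gom in both complements and maps to the given gom under $\eta$. For part~\eqref{item:LemmaPart2}, the transversality argument you sketch at the end is the cleanest route and is complete as stated: a nullhomotopy in $\C^k\setminus V_1$ is a map of a $2$-disc, which after a small perturbation meets $V_2$ only at finitely many smooth points of $V_2\setminus V_1$ (the bad locus $V_2^{\mathrm{sing}}\cup(V_1\cap V_2)$ has real codimension $\geq 4$, so a $2$-disc generically misses it), and excising small discs around those points exhibits $[\gamma]$ as a product of conjugates of goms around components of $V_2$. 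The earlier tubular-neighborhood/van Kampen sketch is also valid but is more work to make precise at the singular locus; you can safely drop it in favor of the transversality argument.
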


Let $P \in \C[y_1, \dots, y_k]$ with vanishing locus $V$. We are interested in the intersection $V \cap Y_i$
where $Y_i = \{\,y \in \C^k \, | \, y_i  = 0\,\}$. To simplify the presentation, we assume that $i = k$.
In the following proposition, we work against an auxiliary variable, which we can assume to be $y_1$.
We use $y = (y_1, \bar y, y_k)$, where $\bar y = (y_2, \dots, y_{k-1})$, as coordinates
of $\C^k$. Let
\[
\disc_j(P) =  \res_{j}\Big(P, \frac{\partial P}{\partial y_j}\Big)
\]
denote the \emph{discriminant} of $P$ with respect to $y_j$. That is, $\disc_j(P)$ is the resultant
of $P$ the derivative $\partial P/\partial y_j$ with respect to the variable $y_j$.
Notice that $\disc_j(P)$ does not depend on $y_j$.

\begin{proposition}
\label{Prop:FundamentalGroupMain}
Let $V$ be a hypersurface defined by a polynomial $P \in \C[y_1, \bar y, y_k]$. Let 
\[
K(\bar y, y_k) = \disc_{1}(P)(\bar y, y_k)
\]
denote the discriminant of $P$ with respect to the auxiliary variable $y_1$. If
\begin{enumerate}
\item all common factors of $K$ and $P$ belong to $\C[\bar y]$, and  \label{item:Theorem1}
\item $K$ restricted to $Y_k$ is nontrivial \label{item:Theorem2}
\end{enumerate}
then, for a base point $x = (x_1, \bar x, 0) \in Y_k \setminus V$, the canonical morphism
\[
\eta \colon \pi_1\big(Y_k \setminus (Y_k \cap V), x\big) \rightarrow \pi_1(\C^k\,\setminus V, x)
\]
is surjective.
\end{proposition}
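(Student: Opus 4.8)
The plan is to reduce the statement to a standard Zariski-type argument for a generic pencil of hyperplanes, with the hyperplane $Y_k$ as one member of the pencil, using the auxiliary variable $y_1$ to set up a Lefschetz fibration. First I would consider the projection $\pi\colon \C^k \to \C^{k-1}$, $(y_1, \bar y, y_k) \mapsto (\bar y, y_k)$, whose fibers are the affine lines $L_{(\bar y, y_k)} = \pi^{-1}(\bar y, y_k)$. Condition \eqref{item:Theorem1} says that $P$ has no factor in $\C[\bar y, y_k]$ other than those already in $\C[\bar y]$; equivalently, once we pass to the complement of the hypersurface $\{K = 0\}$ in $\C^{k-1}$ (a hypersurface which by condition \eqref{item:Theorem2} does \emph{not} contain $Y_k \cap \{y_1\text{-axis removed}\}$, i.e., which meets $Y_k$ properly), each fiber line $L_{(\bar y, y_k)}$ meets $V$ in exactly $d = \deg_{y_1} P$ distinct points, so $\pi$ restricts to a locally trivial fiber bundle over $\C^{k-1} \setminus \{K = 0\}$ with fiber $\C \setminus \{d \text{ points}\}$.

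Next I would invoke the long exact homotopy sequence of this fibration, or rather the classical consequence of it (this is exactly the content underlying \cite[Lem.~2.1]{Bes01} and Zariski's theorem on generic line sections): the fundamental group $\pi_1(\C^k \setminus V, x)$ is generated by the goms coming from a single generic fiber line, together with loops in the base. More precisely, the point is that every gom around every irreducible component $Z$ of $V$ is already realized inside a generic line of the pencil $\pi$ — this is where condition \eqref{item:Theorem1} does the work, guaranteeing that no component of $V$ is a union of fibers of $\pi$, so each component is genuinely cut transversally by the generic line and its gom lives in that line. Then I would degenerate the generic line to the special member $Y_k$: condition \eqref{item:Theorem2} ensures $Y_k$ is itself a good member of the pencil in the relevant open locus (it is not contained in $\{K=0\}$, so $Y_k \cap V$ has the expected dimension and $Y_k$ meets $V$ transversally away from a codimension-$\geq 2$ set), so the inclusion $Y_k \setminus (Y_k \cap V) \hookrightarrow \C^k \setminus V$ already carries generators of $\pi_1(\C^k \setminus V, x)$; hence $\eta$ is surjective. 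Here one uses that $\pi_1$ of the base of the fibration contributes nothing new relative to what $Y_k$ already sees — concretely, one can connect the generic line to $Y_k$ by a path in the good locus of the base and transport the goms along it, exactly as in the gom definition given before the proposition.

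The main obstacle, and the step I would spend the most care on, is verifying that $Y_k$ is an \emph{admissible} member of the pencil, i.e., that passing from a generic line to the specific hyperplane $Y_k$ does not lose generators. Genericity of a line is used twice — once to make $\pi$ a fiber bundle (handled on the open set $\C^{k-1} \setminus \{K = 0\}$, where condition \eqref{item:Theorem2} guarantees $Y_k$ lands), and once to ensure the line avoids the (codimension $\geq 2$) singular locus of $V$ and the non-transversality locus; this second point is more delicate for $Y_k$, which is fixed rather than generic, and is precisely why we work relative to the auxiliary variable $y_1$ and restrict attention to the locus where $K \neq 0$. I would argue that the relevant bad locus in $Y_k$ has codimension $\geq 2$ inside $Y_k$ — using that $K|_{Y_k} \not\equiv 0$ and that common factors of $K$ and $P$ lie in $\C[\bar y]$ — so that $\pi_1(Y_k \setminus (Y_k \cap V), x)$ is unaffected by excising it, and then the gom-lifting argument of Lemma~\ref{lem:BessisLemma}(\ref{item:LemmaPart1}) applies fiberwise. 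Finally I would note that surjectivity (as opposed to injectivity) is all that is claimed, which keeps the argument purely at the level of generators and avoids any analysis of relations in the base group.
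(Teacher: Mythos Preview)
Your approach contains a genuine gap. You assert that condition \eqref{item:Theorem1} ``guarantees that no component of $V$ is a union of fibers of $\pi$,'' so that every gom can be realized inside a generic $y_1$-line. This is false. Any irreducible factor $Q$ of $P$ that is independent of $y_1$ automatically divides $K = \res_1(P,\partial_1 P)$ (since $Q$ divides both $P$ and $\partial_1 P$, it factors out of the Sylvester determinant), so $Q$ is a common factor of $K$ and $P$. Condition \eqref{item:Theorem1} does not exclude such factors; it only forces them to lie in $\C[\bar y]$ rather than $\C[\bar y, y_k]$. The corresponding components of $V$ \emph{are} unions of $y_1$-fibers, and their goms cannot be realized in any fiber of your projection. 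This is the same gap as your unjustified claim that ``$\pi_1$ of the base contributes nothing new relative to what $Y_k$ already sees'': you would need $\pi_1(\{y_k=0\}\setminus W)\to\pi_1(\C^{k-1}\setminus W)$ to be surjective for $W=\{K=0\}$, which is itself a Zariski-type statement you have not proved. (Your phrasing is also confused here: $Y_k$ is a hyperplane, not a member of the pencil of $y_1$-lines; at best it is a family of such lines.)

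The paper closes this gap by decomposing $V = V_K \cup V_{\widehat K}$, where $V_K$ is the set of components of $V$ contained in $\{K=0\}$. Condition \eqref{item:Theorem1} forces each $V_K$-component to be cut out by a polynomial in $\C[\bar y]$, so $\C^k \setminus V_K$ is a trivial $\C$-bundle over $Y_k \setminus (Y_k\cap V_K)$ via projection along $y_k$ (not $y_1$); the induced map $\eta'$ on $\pi_1$ is therefore an isomorphism. A diagram chase with Lemma~\ref{lem:BessisLemma} then reduces the problem to showing that every gom around a component of $V_{\widehat K}$ lies in the image of $\eta$. For those goms your fibration idea does work: a gom of $V_{\widehat K}$ in $\pi_1(\C^k\setminus(V\cup W))$ maps to zero in the base group, hence comes from the fiber $\pi_1(L\setminus(L\cap V))$, and condition \eqref{item:Theorem2} lets one choose the fiber line $L$ inside $Y_k$. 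The missing idea is thus the two-projection strategy: project along $y_k$ to handle $V_K$, and along $y_1$ to handle $V_{\widehat K}$.
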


\begin{proof}
Let $W$ be the set of irreducible components of the hypersurface $K(\bar y, y) = 0$ in $\C^k$.
By the assumptions  \eqref{item:Theorem1} and \eqref{item:Theorem2} we can choose a base point $x \in (Y_1\cap Y_k) \setminus (V \cup W)$.
Write $V = V_K \cup V_{\widehat K}$, where $V_K  =V \cap W$
and $V_{\widehat K} = V \setminus W$. (Here, we view $V$, $V_K$, and $V_{\widehat K}$ as sets
of irreducible hypersurfaces in $\C^k$.)

By the assumption \eqref{item:Theorem1}, 
each element of $V_K$ is the vanishing locus of a polynomial from 
$\C[\bar y] \subset \C[y_1, \bar y, y_k]$ and, hence, 
$\C^k\,\setminus V_K$ is a trivial bundle over
$Y_k\setminus (Y_k\cap V_K)$ with fibers isomorphic to $\C$.
It follows that the morphism $\eta'$ in the commutative diagram
\begin{center}
\begin{tikzcd}
& \pi_1 \big(Y_k \setminus (Y_k \cap V), x\big) \arrow[r, "\eta"]\arrow[d]
& \pi_1 (\C^k\, \setminus V, x)\arrow[d, "\theta"] \\
& \pi_1 \big(Y_k \setminus (Y_k \cap V_K), x\big) \arrow[r, "\eta'"]
& \pi_1 (\C^k\, \setminus V_K, x)
\end{tikzcd}
\end{center}
is an isomorphism. Let $\gamma \in \pi_1(\C^k\,\setminus V, x)$.
Since $\eta'$ is an isomorphism, there is an element $\gamma' \in \pi_1(Y_k\setminus (Y_k\cap V_K), x)$
such that $\theta(\gamma)\eta'(\gamma') = 0$, and it follows from Lemma~\ref{lem:BessisLemma} part \eqref{item:LemmaPart1} that $\gamma'$ lifts to $\pi_1\big(Y_k \setminus (Y_k\cap V), x\big)$.
Hence, it suffices to show that each $\gamma \in  \pi_1(\C^k\,\setminus V, x)$
with $\theta(\gamma) = 0$ belongs to the image of $\eta$.

Assume that $\gamma \in  \pi_1(\C^k\,\setminus V, x)$
belongs to the kernel of the 
morphism $\theta$. It follows from Lemma~\ref{lem:BessisLemma} part \eqref{item:LemmaPart2}
that $\gamma$ belongs to the subgroup generated by the goms
around $V_{\widehat K}$. Hence, there is no loss of generality in assuming that $\gamma$
is a gom around $V_{\widehat K}$.

The final step is analogous to the argument of \cite[Thm.~2.5]{Bes01}.
Let $\delta$ be the degree of $P$ in the variable $y_1$.
We obtain a trivial fiber bundle 
\[
\C^k\,\setminus (V \cup W) \rightarrow Y_1 \setminus W,
\]
whose fibers are complex lines with $\delta$ points removed, 
and we obtain the long exact sequence of homotopy groups 
\begin{center}
\begin{tikzcd}
\dots \arrow[r]  
& \pi_1\big(L \setminus (L\cap V), x\big) \arrow[r, "\tau"] 
& \pi_1\big(\C^k\,\setminus (V\cup W), x\big) \arrow[r, "\tau'"] 
& \pi_1\big(Y_1 \setminus W, x\big) \arrow[r]
& 0.
\end{tikzcd}
\end{center}
Any gom $\gamma$ of $V_{\widehat K}$ in $\pi_1\big( \C^k\,\setminus V, x\big)$
lifts by Lemma~\ref{lem:BessisLemma} part \eqref{item:LemmaPart1} to a gom of $V_{\widehat K}$ in $\pi_1\big( \C^k\,\setminus (V\cup W), x\big)$,
which belongs to the kernel of the morphism $\tau'$. Hence, $\gamma$ lies in the image of the
morphism $\tau$.
But \eqref{item:Theorem2} implies that we can choose the fiber $L$  inside the plane $Y_k$
and, hence, $\tau$ is simply the morphism $\eta$ restricted to
$\pi_1\big(L\setminus (L \cap V), x \big)$. The result follows.
\end{proof}

We now translate the geometric conditions of Proposition~\ref{Prop:FundamentalGroupMain}
in terms of combinatorial conditions on the collection $A$. 
Recall that restricting to the hyperplane $Y_k$ corresponds to deleting the point $\alpha_k \in A$. 
The auxiliary variable $y_1$ corresponds to an auxiliary point $\alpha_1 \in A$. Before stating
this result, we need a combinatorial definition.

\begin{definition}
  \label{def:latticeRedundant}
Let $\alpha_i \in A$ and set $A_i = A \,\setminus\{\alpha_i\}$.
We say that $\alpha_i$ is \emph{lattice redundant}
if all face lattices of $A$ and $A_i$ coincide. That is, if for each face $\Gamma \preccurlyeq N$ we have 
that $\Z_{A \cap \Gamma} = \Z_{A_i \cap \Gamma}$.
\end{definition}

\begin{proposition}
\label{pro:AZariski}
Let $\alpha_k \in A$ be lattice redundant. Let $\alpha_1$ be an auxiliary point, 
contained in a minimal face $\Gamma_1 \preccurlyeq N$. Assume, in addition, that
\begin{enumerate}
\item $\alpha_k$ is contained in the closure of\/ $\Gamma_1$, and 
\item if $\alpha_1 \in \Gamma_2$,
then either $A\cap \Gamma_2$ is defective or $m(A, \Gamma_2) = m(A_k, \Gamma_2)$.
\end{enumerate}
Then, the canonical morphism
\[
\eta\colon \pi_1\big(\C^{A_k} \setminus E_{A_k}, (x_1, \bar x)\big) \rightarrow \pi_1\big(\C^A \setminus E_A, (x_1,  \bar x, 0)\big)
\]
is surjective.
\end{proposition}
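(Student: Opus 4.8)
The plan is to deduce Proposition~\ref{pro:AZariski} from Proposition~\ref{Prop:FundamentalGroupMain} by taking $P = \widehat E_A$ (the reduced principal $A$-determinant from \eqref{eqn:reduced}) in the variables $y = x$, with $y_k = x_k$ the coordinate dual to the lattice-redundant character $\alpha_k$ and $y_1 = x_1$ the auxiliary coordinate dual to $\alpha_1$. Indeed, since fundamental groups of hypersurface complements are unchanged when we replace $E_A$ by its radical $\widehat E_A$, it is equivalent to show surjectivity of $\pi_1(\C^{A_k}\setminus \widehat E_{A_k}) \to \pi_1(\C^A\setminus \widehat E_A)$; and $\widehat E_A$ restricted to $Y_k$ defines $V_A\cap Y_k = V_{A_k}$ by Lemma~\ref{lem:PADRestriction} (using that $\alpha_k$ is not a vertex, which follows from lattice redundancy). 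So the task reduces to verifying hypotheses \eqref{item:Theorem1} and \eqref{item:Theorem2} of Proposition~\ref{Prop:FundamentalGroupMain} for $P=\widehat E_A$ and this choice of auxiliary variable.

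For hypothesis \eqref{item:Theorem2} — that $K = \disc_1(\widehat E_A)$ restricted to $Y_k$ is nontrivial — I would argue as follows. Because $\alpha_k$ is lattice redundant, $\Z_{A_k} = \Z_A$, so by Lemma~\ref{lem:SecondaryPolytopeRestriction} the secondary polytope $\Sigma_{A_k}$ is the facet of $\Sigma_A$ in $\{u_k = 0\}$; in particular $\widehat E_A|_{Y_k}$ is nontrivial and equals (up to radical) $\widehat E_{A_k}$, and it depends nontrivially on $y_1 = x_1$ as long as $\alpha_1$ is genuinely used in some triangulation of $A_k$, i.e. as long as $\alpha_1$ is a vertex of $N$ or an interior point of some face — which is guaranteed since $\alpha_1$ lies in a minimal face $\Gamma_1$ and condition (1) forces $\Gamma_1$ to have positive dimension only if $\alpha_k$ is not a vertex (consistent with lattice redundancy). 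Then $\disc_1(\widehat E_A)|_{Y_k} = \disc_1(\widehat E_A|_{Y_k})$ up to an extraneous factor coming from the leading coefficient in $y_1$, and the right-hand side is nontrivial because $\widehat E_{A_k}$ is a nonconstant squarefree polynomial in $y_1$ (a squarefree polynomial of positive degree in one variable has nonzero discriminant). Here one must be slightly careful about whether the $y_1$-degree of $\widehat E_A$ drops upon restriction to $Y_k$; condition (1), placing $\alpha_k$ in $\overline{\Gamma_1}$, is precisely what controls this — the leading $y_1$-term of $\widehat E_A$ comes from faces containing $\alpha_1$, and for those faces the relevant discriminant either is trivial (defective case) or has unchanged multiplicity under deletion of $\alpha_k$ by condition (2).

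For hypothesis \eqref{item:Theorem1} — that every common factor of $K$ and $P=\widehat E_A$ lies in $\C[\bar y]$ — the content is that an irreducible component $D_{A\cap\Gamma}$ of $\widehat E_A$ that also divides $K = \disc_1(\widehat E_A)$ must be independent of both $y_1$ and $y_k$. A component $D_{A\cap\Gamma}$ divides $\disc_1(\widehat E_A)$ essentially when either $D_{A\cap\Gamma}$ itself is a repeated factor seen along the $y_1$-direction (impossible, as $\widehat E_A$ is squarefree) or when $D_{A\cap\Gamma}$ meets another component non-transversally in the $y_1$-direction, or — the genuinely relevant case — when $D_{A\cap\Gamma}$ is constant in $y_1$, i.e. $\alpha_1\notin\Gamma$, so that $D_{A\cap\Gamma}$ contributes to the "content" part of the discriminant. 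For such faces $\Gamma$ with $\alpha_1\notin\Gamma$, I must show $\alpha_k\notin\Gamma$ as well, so that $D_{A\cap\Gamma}\in\C[\bar y]$. This is exactly where the hypothesis that $\alpha_1$ lies in a \emph{minimal} face $\Gamma_1$ and $\alpha_k\in\overline{\Gamma_1}$ enters: if $\alpha_k\in\Gamma$ for a face $\Gamma$ not containing $\alpha_1$, then — using lattice redundancy of $\alpha_k$ and the structure of the face poset — one derives a contradiction with minimality of $\Gamma_1$ among faces containing $\alpha_1$, or else $\Gamma$ contains $\Gamma_1$ hence $\alpha_1$. The precise combinatorial bookkeeping of which faces $\Gamma$ can simultaneously contain $\alpha_k$ but not $\alpha_1$, and showing that the corresponding $D_{A\cap\Gamma}$ are harmless, is the main obstacle; it will require a careful analysis of the factorization \eqref{eqn:PrincipalADeterminant2} and of how $\disc_1$ interacts with a product of polynomials, separating the three sources of discriminantal factors (repeated factors, pairwise tangencies, and content), and invoking condition (2) to handle the multiplicity shifts for the faces that do contain $\alpha_1$. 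Once this is in place, Proposition~\ref{Prop:FundamentalGroupMain} applies directly and yields the claimed surjectivity.
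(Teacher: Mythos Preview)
Your overall strategy matches the paper's: reduce to Proposition~\ref{Prop:FundamentalGroupMain} with $P = \widehat E_A$ and verify its two hypotheses. Your treatment of hypothesis~\eqref{item:Theorem1} is essentially the paper's argument (the common factors of $\widehat E_A$ and $K = \disc_1(\widehat E_A)$ are the $D_{A\cap\Gamma}$ with $\alpha_1\notin\Gamma$, and assumption~(1) forces any face containing $\alpha_k$ to contain $\alpha_1$, so such $\Gamma$ omit $\alpha_k$ too).

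There is a genuine gap in your verification of hypothesis~\eqref{item:Theorem2}. You assert that $\disc_1(\widehat E_A|_{Y_k})$ is nontrivial ``because $\widehat E_{A_k}$ is a nonconstant squarefree polynomial in $y_1$,'' but the polynomial whose $y_1$-discriminant you need nonzero is $\widehat E_A|_{Y_k}$, not $\widehat E_{A_k}$. These agree only up to radical: the restriction $\widehat E_A|_{Y_k}$ can acquire repeated irreducible factors, and if any repeated factor depends on $y_1$ then the $y_1$-discriminant vanishes identically. Controlling whether the $y_1$-degree drops under restriction (your stated concern) is necessary for $\disc_1$ to commute with restriction, but does not by itself rule out repeated $y_1$-factors.

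This is exactly where condition~(2) does the work, and the mechanism is different from what you sketch. The paper proves the sharper claim that for every face $\Gamma$ one has $D_{A\cap\Gamma}(y_1,\bar y,0) = Q_\Gamma(\bar y)\,D_{A_k\cap\Gamma}(y_1,\bar y)$ with $Q_\Gamma$ \emph{independent of $y_1$}. The argument is a degree count carried out on the \emph{non-reduced} $E_A$: by Lemma~\ref{lem:SecondaryPolytopeRestriction}, $E_A|_{Y_k}$ and $E_{A_k}$ share the Newton polytope $\Sigma_{A_k}$ and hence the same $y_1$-degree; writing $E_A|_{Y_k} = \prod_\Gamma D_{A_k\cap\Gamma}^{\,m(A,\Gamma)+d(\Gamma)}$ and comparing with $E_{A_k} = \prod_\Gamma D_{A_k\cap\Gamma}^{\,m(A_k,\Gamma)}$, condition~(2) forces $d(\Gamma)=0$ for every nondefective face containing $\alpha_1$, whence each $Q_\Gamma\in\C[\bar y]$. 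It follows that $\widehat E_A|_{Y_k} = \big(\prod_\Gamma Q_\Gamma\big)\cdot \widehat E_{A_k}$ with the first factor in $\C[\bar y]$, so $\disc_1(\widehat E_A)|_{Y_k}$ equals $\disc_1(\widehat E_{A_k})$ times a nonzero element of $\C[\bar y]$, and this is nonzero since $\widehat E_{A_k}$ is reduced. Your outline names condition~(2) but misses this degree-counting mechanism and the reason one must pass through the non-reduced $E_A$ (where the multiplicities $m(A,\Gamma)$ actually appear) to exploit it.
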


\begin{proof}
Recall that $D_{A\cap \Gamma}$ denotes the $A\cap\Gamma$-discriminant, which
appear as a factor of the principal $A$-determinant $E_A(f)$ with multiplicity
$m(A, \Gamma)$, see \eqref{eqn:ADiscriminant} and \eqref{eqn:PrincipalADeterminant2}.

We first claim that for each face $\Gamma \preccurlyeq N$ there is a polynomial $Q_\Gamma \in \C[\bar y]$
such that 
\[
D_{A\cap \Gamma}(y_1, \bar y, 0) = Q_\Gamma(\bar y) D_{A_k \cap \Gamma}(y_1,\bar y).
\]
A priori, we know, by Lemma~\ref{lem:PADRestriction}, that $Q_\Gamma \in \C[y_1, \bar y]$ is a product of $A\cap F$-discriminants of 
faces $F \preccurlyeq \Gamma$
(possibly including the face $F = \Gamma$).
Let $d(F)$ denote the multiplicity of the $A\cap F$-discriminant
in the product of all coefficients $Q_\Gamma$ as $\Gamma$ ranges over all faces of $N$.
It follows that, up to a constant,
\[
E_A(y_1, \bar y, 0) = \prod_{\Gamma \preccurlyeq N} D_{A_k \cap \Gamma}^{m(A,\Gamma) + d(\Gamma)}(y_1, \bar y).
\]
Since $\alpha_k$ is lattice redundant, the Newton polytope of $E_A(y_1, \bar y, 0) \in \C[y_1, \bar y]$
coincides with the secondary polytope $\Sigma_{A_k}$ by Lemma~\ref{lem:SecondaryPolytopeRestriction}.
Hence, the degree in $y_1$ of $E_A(y_1, \bar y, 0)$ and $E_{A_k}(y_1, \bar y)$ coincide. 
Counting said degree, we obtain
\[
\sum_{\Gamma \preccurlyeq N}(m(A, \Gamma) + d(\Gamma)) \deg_1(D_{A_k \cap \Gamma}) = \sum_{\Gamma \preccurlyeq N} m(A_k, \Gamma) \deg_1(D_{A_k \cap \Gamma}).
\]
If $A\cap \Gamma$ is defective or if $\alpha_1 \notin \Gamma$, then $\deg_1(D_{A_k \cap \Gamma}) = 0$.
For the remaining faces, we are under the assumption that $m(A, \Gamma) = m(A_k, \Gamma)$.
It follows that 
\[
\sum d(\Gamma) \deg_1(D_{A_k \cap \Gamma}) =0,
\]
where the sum is taken over all faces $\Gamma$ containing $y_1$ such that $A \cap \Gamma$ is nondefective.
As the degree $\deg_1(D_{A_k \cap \Gamma})$ is positive for such faces,
we conclude that $d(\Gamma) = 0$ for each such face. It follows that no coefficient $Q_\Gamma$
contains a factor which depends on $y_1$ and, hence, $Q_\Gamma \in \C[\bar y]$ as claimed.

That $Q_\Gamma(\bar y)$ does not depend on $y_1$ implies that
\[
\disc_1(\widehat E_A)(\bar y, y_k)\Big|_{Y_k}
= \disc_1(\widehat E_A)(\bar y, 0)
= \big(\disc_1(\widehat E_{A_k})(\bar y)\big)
\prod_{\Gamma \preccurlyeq N} Q_\Gamma(\bar y),
\]
where $\widehat E_A$ and $\widehat E_{A_k}$ denotes the reduced principal $A$-determinants
from \eqref{eqn:reduced}.
Since $\widehat E_{A_k}$ is reduced, the first factor in the right hand side is
nontrivial. It follows that
\[
K(\bar y, y_k) = \disc_1(\widehat E_A)(\bar y, y_k)
\]
is nontrivial when restricted to $Y_k$. The common factors of $K(\bar y, y_k)$ and
$\widehat E_A(y_1, \bar y, y_k)$ are the $(A\cap \Gamma)$-discriminants for
faces $\Gamma$ not containing $y_1$. By assumption, any face containing
$y_k$ also contains $y_1$ and, hence, said common factors
belongs to $\C[\bar y]$. That is, all assumptions
of Proposition~\ref{Prop:FundamentalGroupMain} are fulfilled.
\end{proof}

%%%%%%%%%%%%%%%%%%%%%%%%%%%
\section{Saturations}
%%%%%%%%%%%%%%%%%%%%%%%%%%%
\label{sec:Saturations}

Recall that $A$ is saturated if $A = N \cap \Z_A$.
We now give the precise definition of the face saturations alluded to in the introduction.
Recall that $\cF$ denotes the face poset lattice of $N$, and that $\cF_{\intt} \subset \cF$
denote the semi-ideal of $\cF$ generated by all faces $\Gamma \preccurlyeq N$ that 
contain a relative interior point in $A$. 

\begin{definition}
  \label{def:saturations}
The  \emph{face saturation} $A^s$ and the
\emph{partial face saturation} $A^p$ of the collection $A$ are given by 
\[
A^s =\bigcup_{\Gamma \,\in \,\cF} \Gamma^\circ\cap \Z_{A\cap \Gamma}
\quad \text{and} \quad
A^p =\bigcup_{\Gamma \,\in \,\cF_{\intt}} \Gamma^\circ\cap \Z_{A\cap \Gamma},
\]
where $\Gamma^\circ$ denotes the relative interior of the face $\Gamma$.
\end{definition}

That is, $A^s$ is obtained from $A$ by adjoining, for each face $\Gamma \preccurlyeq N$,
all points of the lattice $\Z_{A \cap \Gamma}$ which are contained in the
relative interior of $\Gamma$. We say that we \emph{saturate each face of $N$}.
The partial face saturation differs from the
face saturation in that we only saturate the faces which belong to the closure of
a face that already contains a relative interior point.

\begin{example}
\label{ex:saturation}
\begin{figure}[t]
\includegraphics[width=100mm]{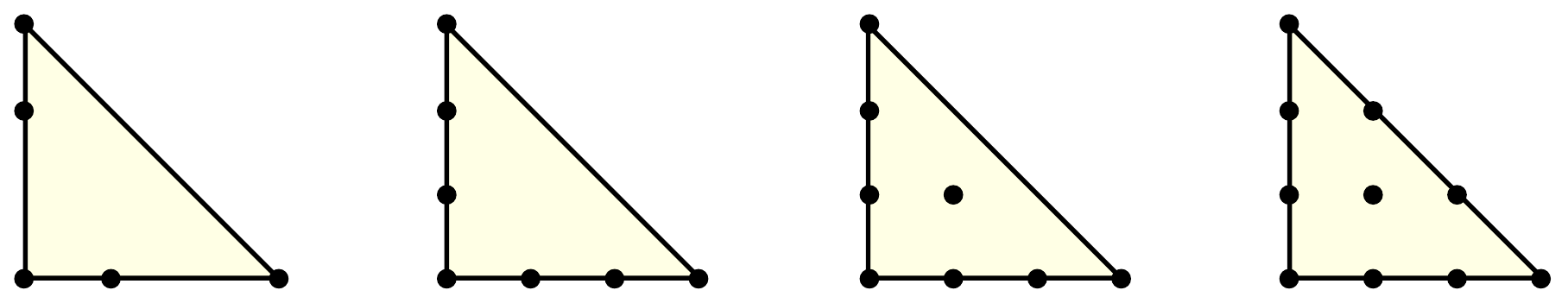}
\caption{The collections $A, A^p, A^s,$ and $N\cap \Z_A$ from Example~\ref{ex:saturation}.}
\label{fig:Saturations}
\end{figure}
Consider the two dimensional collection
\[
A = \left[\begin{array}{ccccc} 
1 & 1 & 1 & 1 & 1\\
0 & 3 & 0 & 1 & 0\\
0 & 0 & 3 & 0 & 2
\end{array}\right].
\]
Notice that $\Z_A = \Z^3$, see Figure~\ref{fig:Saturations}. 
The relative face saturation $A^p$ is obtained by saturating the two
edges of $N$ which contain a relative interior point. That face saturation
$A^s$ is obtained from $A^p$ by also saturating the full-dimensional 
face relative $\Z_A$. Note that we only add the interior point
$(1,1,1)$ at this step. The saturation $N\cap \Z_A$ consists
of all integer points in the Newton polytope.
\end{example}

\begin{proposition}
\label{prop:InductionStep}
Let $\alpha_k \in A$ be lattice redundant, and assume that there is
a face $\Gamma \in \cF_{\intt}$, containing $\alpha_k$,
which has a relative interior point $\alpha_1$ distinct from $\alpha_k$.
Then, the canonical morphism
\[
\eta\colon \pi_1\big(\C^{A_k} \setminus E_{A_k}, (x_1, \bar x)\big) \rightarrow \pi_1\big(\C^A \setminus E_A, (x_1,  \bar x, 0)\big)
\]
is surjective.
\end{proposition}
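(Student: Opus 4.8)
The plan is to deduce Proposition~\ref{prop:InductionStep} from Proposition~\ref{pro:AZariski} by verifying its two numbered hypotheses in the present setting, where we take $\alpha_1$ to be the relative interior point of $\Gamma$ guaranteed by the statement. The minimal face containing $\alpha_1$ is $\Gamma_1 = \Gamma$ itself, since $\alpha_1$ lies in the relative interior of $\Gamma$. Hypothesis (1) of Proposition~\ref{pro:AZariski}, that $\alpha_k$ lies in the closure of $\Gamma_1 = \Gamma$, is immediate from the hypothesis here that $\Gamma$ contains $\alpha_k$. So the work is entirely in checking hypothesis (2): for every face $\Gamma_2 \preccurlyeq N$ with $\alpha_1 \in \Gamma_2$, either $A \cap \Gamma_2$ is defective or $m(A, \Gamma_2) = m(A_k, \Gamma_2)$.

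The key observation is that since $\alpha_1$ lies in the relative interior of $\Gamma$, any face $\Gamma_2$ containing $\alpha_1$ must satisfy $\Gamma \preccurlyeq \Gamma_2$; in particular $\Gamma_2$ contains $\alpha_k$ as well. I would then analyze the multiplicity formula $m(A, \Gamma_2) = i(A, \Gamma_2)\, v(A, \Gamma_2)$ from \eqref{eqn:multiplicities} and compare it termwise to $m(A_k, \Gamma_2)$. For the index $i(A, \Gamma_2) = [\Z_A \cap (\Gamma_2)_\R : \Z_{A \cap \Gamma_2}]$: the lattice redundancy of $\alpha_k$ (Definition~\ref{def:latticeRedundant}) gives $\Z_{A \cap \Gamma_2} = \Z_{A_k \cap \Gamma_2}$ for every face, and since removing $\alpha_k$ does not enlarge the ambient lattice in the relevant span ($\Z_A \cap (\Gamma_2)_\R = \Z_{A_k} \cap (\Gamma_2)_\R$ again by lattice redundancy applied to the largest face, or directly), we get $i(A, \Gamma_2) = i(A_k, \Gamma_2)$. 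For the subdiagram volume $v(A, \Gamma_2)$: this is the lattice volume of the difference of convex hulls of $\N_A/\Gamma_2$ and $(\N_A/\Gamma_2)^*$ in $\R \otimes \Z_A/\Gamma_2$. Since $\alpha_k \in \Gamma_2$, the class of $\alpha_k$ in $\Z_A/\Gamma_2 = \Z_A/(\Z_A \cap (\Gamma_2)_\R)$ is zero, so $\alpha_k$ contributes nothing to the semigroup $\N_A/\Gamma_2$ beyond what is already there; removing it from $A$ leaves $\N_{A_k}/\Gamma_2 = \N_A/\Gamma_2$ unchanged (the lattice $\Z_{A \cap \Gamma_2}$ appearing in \eqref{eqn:SubdiagramVolume} is unchanged by lattice redundancy). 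Hence $v(A, \Gamma_2) = v(A_k, \Gamma_2)$, and therefore $m(A, \Gamma_2) = m(A_k, \Gamma_2)$ for all faces $\Gamma_2 \ni \alpha_1$, which is stronger than what hypothesis (2) demands.

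I expect the main obstacle to be the careful bookkeeping in the subdiagram volume comparison: one must confirm that the admissible semigroup $\N_A/\Gamma_2$ defined in \eqref{eqn:SubdiagramVolume} genuinely does not see $\alpha_k$ once we quotient by $\Z_A \cap (\Gamma_2)_\R$ — the subtlety is that $\N_A/\Gamma_2$ is built from $\Z_{A \cap \Gamma_2} + \N_A$, so one needs $\alpha_k \in \Gamma_2$ to ensure $\alpha_k \in \Z_{A \cap \Gamma_2} \subseteq \Z_{A \cap \Gamma_2} + \N_{A_k}$, making its presence in $A$ redundant at the level of this quotient semigroup. A secondary point to handle cleanly is the index computation: lattice redundancy is stated face by face as $\Z_{A \cap \Gamma} = \Z_{A_k \cap \Gamma}$, and one should note that applying this to $\Gamma = N$ (or to $\Gamma_2$ directly if $\alpha_k$'s span issues arise) pins down the numerator $\Z_A \cap (\Gamma_2)_\R$. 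Once both hypotheses of Proposition~\ref{pro:AZariski} are in place, the conclusion — surjectivity of $\eta$ — follows verbatim.

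\begin{proof}
Apply Proposition~\ref{pro:AZariski} with the auxiliary point taken to be the relative interior point $\alpha_1$ of $\Gamma$ provided in the statement. Since $\alpha_1 \in \Gamma^\circ$, the minimal face of $N$ containing $\alpha_1$ is $\Gamma_1 = \Gamma$. Hypothesis (1) of Proposition~\ref{pro:AZariski}, namely that $\alpha_k$ lies in the closure of $\Gamma_1$, holds because $\Gamma$ contains $\alpha_k$ by assumption.

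It remains to verify hypothesis (2): for every face $\Gamma_2 \preccurlyeq N$ with $\alpha_1 \in \Gamma_2$, either $A \cap \Gamma_2$ is defective or $m(A, \Gamma_2) = m(A_k, \Gamma_2)$. We show the latter equality always holds. Since $\alpha_1 \in \Gamma^\circ$, any face $\Gamma_2$ containing $\alpha_1$ must satisfy $\Gamma \preccurlyeq \Gamma_2$; in particular $\alpha_k \in \Gamma \subseteq \Gamma_2$. Recall from \eqref{eqn:multiplicities} that $m(A, \Gamma_2) = i(A, \Gamma_2)\, v(A, \Gamma_2)$.

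For the index: because $\alpha_k$ is lattice redundant, $\Z_{A \cap \Gamma_2} = \Z_{A_k \cap \Gamma_2}$, and applying the same with $\Gamma = N$ gives $\Z_A = \Z_{A_k}$, hence $\Z_A \cap (\Gamma_2)_\R = \Z_{A_k} \cap (\Gamma_2)_\R$. Therefore
\[
i(A, \Gamma_2) = \big[\Z_A \cap (\Gamma_2)_\R : \Z_{A \cap \Gamma_2}\big] = \big[\Z_{A_k} \cap (\Gamma_2)_\R : \Z_{A_k \cap \Gamma_2}\big] = i(A_k, \Gamma_2).
\]
For the subdiagram volume: since $\alpha_k \in \Gamma_2$, its class in $\Z_A/\Gamma_2 = \Z_A/(\Z_A \cap (\Gamma_2)_\R)$ is zero, so in the admissible semigroup
\[
\N_A/\Gamma_2 = (\Z_{A \cap \Gamma_2} + \N_A)\big/(\Z_A \cap (\Gamma_2)_\R)
\]
of \eqref{eqn:SubdiagramVolume}, the generator $\alpha_k$ is already accounted for by $\Z_{A \cap \Gamma_2}$; moreover $\Z_{A \cap \Gamma_2} = \Z_{A_k \cap \Gamma_2}$ and $\Z_A \cap (\Gamma_2)_\R = \Z_{A_k} \cap (\Gamma_2)_\R$ by lattice redundancy. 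Hence $\N_A/\Gamma_2 = \N_{A_k}/\Gamma_2$ as subsemigroups of the same lattice, so their subdiagram volumes agree: $v(A, \Gamma_2) = v(A_k, \Gamma_2)$. Combining, $m(A, \Gamma_2) = m(A_k, \Gamma_2)$.

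Both hypotheses of Proposition~\ref{pro:AZariski} are thus satisfied, and the surjectivity of
\[
\eta\colon \pi_1\big(\C^{A_k} \setminus E_{A_k}, (x_1, \bar x)\big) \rightarrow \pi_1\big(\C^A \setminus E_A, (x_1, \bar x, 0)\big)
\]
follows.
\end{proof}
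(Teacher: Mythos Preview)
Your proof is correct and follows essentially the same approach as the paper: verify the hypotheses of Proposition~\ref{pro:AZariski} with $\alpha_1$ as the auxiliary point, using lattice redundancy to match the indices $i(A,\Gamma_2)=i(A_k,\Gamma_2)$ and the containment $\alpha_k\in\Gamma_2$ (via $\Gamma\preccurlyeq\Gamma_2$) to conclude $\N_A/\Gamma_2=\N_{A_k}/\Gamma_2$ and hence equality of subdiagram volumes. If anything, your version is slightly more explicit than the paper's in spelling out why every face $\Gamma_2\ni\alpha_1$ must contain $\alpha_k$.
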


\begin{proof}
We only need to check that the conditions of Proposition~\ref{pro:AZariski} are fulfilled,
with $\alpha_1$ as the auxiliary point.
Recall the expression for the multiplicities $m(A, \Gamma)$ from \eqref{eqn:multiplicities}.
Since $\alpha_k$ is lattice redundant, we have that $i(A, \Gamma) = i(A_k, \Gamma)$
for all faces $\Gamma \in \cF$. It remains to be checked that the subdiagram volume 
is invariant for any face $\Gamma$ containing $\alpha_1$. This follows a fortiori
from the equality of semigroups $\N_{A_k}/\Gamma  = \N_A/\Gamma$
(cf.~\eqref{eqn:SubdiagramVolume}). Indeed, that
$\Z_A\cap \Gamma_\R = \Z_{A_k} \cap \Gamma_\R$
holds because $\alpha_k$ is lattice redundant. The only generator of $\N_A$ not contained
in $\N_{A_k}$ is $\alpha_k$, but $\alpha_k \in \Z_{A \cap \Gamma} = \Z_{A_k\cap \Gamma}$,
where the last equality holds because of lattice redundancy.
Consequently, $\Z_{A \cap \Gamma} + \N_A = \Z_{A_k \cap \Gamma} + \N_{A_k}$,
from which we deduce that $\N_{A_k}/\Gamma  = \N_A/\Gamma$. Since the semigroups coincide,
so do their subdiagram volumes.
\end{proof}

We are now ready to prove the stronger version of Theorem~\ref{thm:MainFundamentalGroup}.

\begin{theorem}
\label{thm:MainFundamentalGroup2}
Assume that $A$ has an interior point. Then,
the inclusion $\C^A \rightarrow \C^{A^p}$ given by appending zeros to $x$ for the coordinates corresponding
to the characters $A^p\setminus A$, induces a surjective morphism
\[
\eta\colon \pi_1(\C^{A}\setminus V_{A}, x) \rightarrow \pi_1\big(\C^{A^p}\setminus V_{A^p}, (x,0)\big).
\]
\end{theorem}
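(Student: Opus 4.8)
The plan is to deduce Theorem~\ref{thm:MainFundamentalGroup2} from Proposition~\ref{prop:InductionStep} by an induction, peeling off one added character at a time. First I would observe that since $A$ has an interior point, there is a face $\Gamma \in \cF_{\intt}$ with a relative interior point in $A$, so $\cF_{\intt}$ is nonempty and $A^p \supsetneq A$ is possible. The difference $A^p \setminus A$ consists of lattice points $\alpha$ lying in the relative interior of some face $\Gamma \in \cF_{\intt}$, with $\alpha \in \Z_{A \cap \Gamma}$. The strategy is to build a chain
\[
A = B_0 \subsetneq B_1 \subsetneq \cdots \subsetneq B_m = A^p,
\]
where each $B_{j+1} = B_j \cup \{\alpha^{(j+1)}\}$ adds a single point of $A^p \setminus A$, chosen in an order of \emph{nonincreasing} dimension of the minimal face containing it (so interior points of lower-dimensional faces are added first). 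At each step I want to apply Proposition~\ref{prop:InductionStep} with the roles $A \rightsquigarrow B_{j+1}$ and $\alpha_k \rightsquigarrow \alpha^{(j+1)}$, which requires two things: that $\alpha^{(j+1)}$ is lattice redundant in $B_{j+1}$, and that the minimal face $\Gamma$ of $N$ containing $\alpha^{(j+1)}$ lies in $\cF_{\intt}$ and has a relative interior point in $B_{j+1}$ distinct from $\alpha^{(j+1)}$.

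The second requirement is essentially by construction: $\alpha^{(j+1)}$ lies in the relative interior of some $\Gamma \in \cF_{\intt}$, which (being in $\cF_{\intt}$) sits in the closure of a face containing an interior point of $A \subseteq B_j$; and one checks the minimal face of $\alpha^{(j+1)}$ is $\Gamma$ itself, which by definition of $\cF_{\intt}$ already contains a relative interior point of $A$, hence of $B_{j+1}$, and we may take $\alpha_1$ to be such a point (distinct from $\alpha^{(j+1)}$ since the latter was not in $A$). The first requirement, lattice redundancy of $\alpha^{(j+1)}$ in $B_{j+1}$, is the main obstacle and the crux of the argument. I need: for every face $F \preccurlyeq N$, $\Z_{B_{j+1} \cap F} = \Z_{B_j \cap F}$. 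If $\alpha^{(j+1)} \notin F$ this is trivial. If $\alpha^{(j+1)} \in F$, then $\Gamma \subseteq F$ (as $\Gamma$ is the minimal face through $\alpha^{(j+1)}$), so $\Z_{A \cap \Gamma} \subseteq \Z_{A \cap F} \subseteq \Z_{B_j \cap F}$, and since $\alpha^{(j+1)} \in \Z_{A\cap\Gamma}$ by the definition of $A^p$, we get $\alpha^{(j+1)} \in \Z_{B_j \cap F}$, whence $\Z_{B_{j+1}\cap F} = \Z_{B_j \cap F}$. (Here it is important that the points of $A^p \setminus A$ that were added \emph{before} $\alpha^{(j+1)}$ lie in faces of dimension $\le \dim\Gamma$, and any such point inside $F$ lies in a face contained in $F$, so it too is already accounted for in $\Z_{B_j \cap F}$; this is exactly why the ordering by nonincreasing face dimension is needed, so that $\Z_{B_j \cap \Gamma} = \Z_{A \cap \Gamma}$ still holds at step $j$.) So each single-point extension satisfies the hypotheses of Proposition~\ref{prop:InductionStep}.

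Finally I would assemble the pieces: Proposition~\ref{prop:InductionStep} gives a surjection $\pi_1(\C^{B_j}\setminus E_{B_j}, \cdot) \to \pi_1(\C^{B_{j+1}} \setminus E_{B_{j+1}}, \cdot)$ for each $j$, induced by the coordinate inclusion appending a zero in the slot of $\alpha^{(j+1)}$. Composing these $m$ surjections yields a surjection $\pi_1(\C^A \setminus E_A, x) \to \pi_1(\C^{A^p}\setminus E_{A^p}, (x,0))$, and the composite inclusion is precisely the one in the statement, appending zeros in all coordinates of $A^p \setminus A$. Since $V_A$ and $E_A$ (or its reduction $\widehat E_A$) have the same vanishing locus, and fundamental groups only see the underlying set, this is the asserted morphism $\eta$ on $\pi_1(\C^A \setminus V_A, x) \to \pi_1(\C^{A^p}\setminus V_{A^p}, (x,0))$, and it is surjective. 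One small point to address is base-point bookkeeping along the chain: at each step the base point acquires one more zero coordinate, and one must note (as in Proposition~\ref{pro:AZariski}) that a generic $x$ lying off $E_{A^p}$ restricts to a generic point off each $E_{B_j}$, so the chain of base points is consistent; this is routine given assumption~\eqref{item:Theorem2}-type nontriviality already verified inside Proposition~\ref{pro:AZariski}.
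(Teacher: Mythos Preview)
Your approach---an induction along a chain $A=B_0\subset\cdots\subset B_m=A^p$, invoking Proposition~\ref{prop:InductionStep} at each step---is exactly the paper's. The paper records this as a one-line ``simple induction'', so your spelling out of the two hypotheses is the right thing to do.

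There is one slip in your verification of the second hypothesis. You assert that the minimal face $\Gamma$ containing $\alpha^{(j+1)}$ ``by definition of $\cF_{\intt}$ already contains a relative interior point of $A$''. That is false in general: $\cF_{\intt}$ is the \emph{semi-ideal} generated by faces with a relative interior point in $A$, so $\Gamma\in\cF_{\intt}$ only says $\Gamma\preccurlyeq\Gamma'$ for some $\Gamma'$ with a relative interior point $\alpha_1\in A$. (For instance, if $A$ is the vertex set of a triangle together with one interior point, every edge lies in $\cF_{\intt}$ but none has a relative interior point in $A$.) The fix is immediate: feed $\Gamma'$, not $\Gamma$, into Proposition~\ref{prop:InductionStep}. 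Then $\Gamma'$ contains $\alpha^{(j+1)}$, lies in $\cF_{\intt}$ for $B_{j+1}$, and carries the relative interior point $\alpha_1\in A\subset B_{j+1}$, which is distinct from $\alpha^{(j+1)}\notin A$.

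Your lattice-redundancy argument, which does use the minimal face $\Gamma$, is correct---and in fact shows that your ordering by nonincreasing face dimension is unnecessary. You only need $\alpha^{(j+1)}\in\Z_{B_j\cap F}$ for each $F\ni\alpha^{(j+1)}$, and since $A\cap\Gamma\subseteq B_j\cap F$ as sets, the inclusion $\Z_{A\cap\Gamma}\subseteq\Z_{B_j\cap F}$ is automatic regardless of which points were added earlier.
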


\begin{proof}
The proof is a simple induction using Proposition~\ref{prop:InductionStep}.
\end{proof}

\begin{proof}[Proof of Theorem~\ref{thm:MainFundamentalGroup}]
If $A$ has a relative interior point, then $A^p = A^s$.
Hence, the statement follows from Theorem~\ref{thm:MainFundamentalGroup2}.
\end{proof}

For the remainder of this section, we discuss the question of whether,
in general, the partial face saturation $A^p$ can be 
replaced by the face saturation $A^s$ in Theorem~\ref{thm:MainFundamentalGroup2}. 
The main remark is that Proposition~\ref{pro:AZariski} 
is much stronger than what is needed to prove Proposition~\ref{prop:InductionStep}. 

\begin{proposition}
\label{prop:Dim2FaceSaturations}
If the collection $A$ has dimension at most two, then Theorem~\ref{thm:MainFundamentalGroup}
still holds if the partial face saturation $A^p$ is replaced by the face saturation $A^s$.
\end{proposition}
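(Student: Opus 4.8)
The plan is to strip away the trivial cases, reduce the proposition to adjoining the interior lattice points of $N$ one at a time, and handle those steps with the tools of Section~\ref{sec:FundamentalGroups} — the subtle point being that for the very first interior point one must use Proposition~\ref{Prop:FundamentalGroupMain} rather than Proposition~\ref{pro:AZariski}. First, if $A$ has an interior point then $N\in\cF_{\intt}$, so $\cF_{\intt}=\cF$, $A^p=A^s$, and the statement is Theorem~\ref{thm:MainFundamentalGroup}; and if $\dim N\le 1$ one checks directly that $A^p=A^s$. So one may assume $\dim N=2$ and that $A$ has no interior point. In this case the faces of $N$ outside $\cF_{\intt}$ are $N$ itself and the edges containing no interior point of $A$, and for such an edge $\Gamma$ the set $A\cap\Gamma$ consists of its two endpoints, whose affine lattice meets $\Gamma^\circ$ trivially, so $\Gamma$ contributes nothing to the saturation. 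Hence $A^s=A^p\cup(N^\circ\cap\Z_A)$; if this last set is empty we are done by Theorem~\ref{thm:MainFundamentalGroup2}, whose proof (via Proposition~\ref{prop:InductionStep}) needs no interior point when $\dim N\le 2$, since there the only nontrivially saturated faces in $\cF_{\intt}$ are those carrying an interior $A$-point. Otherwise enumerate $N^\circ\cap\Z_A=\{\alpha^{(1)},\dots,\alpha^{(m)}\}$, set $B_0=A^p$ and $B_j=B_{j-1}\cup\{\alpha^{(j)}\}$, so $B_m=A^s$. Each $\alpha^{(j)}$ is lattice redundant in $B_j$: its only face is $N$, and $\alpha^{(j)}\in\Z_A=\Z_{B_{j-1}}=\Z_{B_{j-1}\cap N}$, so no face lattice changes. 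By Theorem~\ref{thm:MainFundamentalGroup2} the canonical morphism $\pi_1(\C^A\setminus V_A)\to\pi_1(\C^{B_0}\setminus V_{B_0})$ is surjective, so it suffices to prove that each $\pi_1(\C^{B_{j-1}}\setminus V_{B_{j-1}})\to\pi_1(\C^{B_j}\setminus V_{B_j})$ is surjective.

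For $j\ge 2$ this is immediate from Proposition~\ref{pro:AZariski}, removing $\alpha^{(j)}$ and taking the auxiliary point to be $\alpha^{(1)}\in N^\circ\subseteq B_{j-1}$: its minimal face is $N$, which contains $\alpha^{(j)}$, so the first hypothesis holds, and $N$ is the only face containing $\alpha^{(1)}$ while $m(B_j,N)=m(B_{j-1},N)=1$, so the second hypothesis holds.

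The case $j=1$ — adjoining the first interior point to the interior-point-free collection $A^p$ — is the heart of the argument, and here Proposition~\ref{pro:AZariski} cannot be applied since there is no interior point available as auxiliary point. Instead one applies Proposition~\ref{Prop:FundamentalGroupMain} directly: restrict the coordinate $y_k$ of $\alpha^{(1)}$, so that $Y_k\setminus(Y_k\cap V_{B_1})\cong\C^{A^p}\setminus V_{A^p}$ by Lemma~\ref{lem:PADRestriction}, and take as auxiliary variable $y_1$ the coordinate of a vertex $v$ of $N$. The first condition of Proposition~\ref{Prop:FundamentalGroupMain} is automatic: the common factors of $\widehat E_{B_1}$ and $\disc_1(\widehat E_{B_1})$ are the discriminants $D_{B_1\cap\Gamma}$ with $v\notin\Gamma$, and since the only face of $N$ containing $\alpha^{(1)}$ is $N$ — which does contain $v$ — none of these factors involves $y_k$, so all lie in $\C[\bar y]$. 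The second condition, that $\disc_1(\widehat E_{B_1})$ restricts nontrivially to $Y_k$, is where dimension two is decisive. Since $\alpha^{(1)}$ is interior, deleting it leaves the Newton polytope $N$ unchanged; combining this with Lemma~\ref{lem:SecondaryPolytopeRestriction} (applicable because $\alpha^{(1)}$ is lattice redundant) and the factorization of principal $A$-determinants shows that $\widehat E_{B_1}|_{Y_k}$ has the same Newton polytope and the same irreducible factors as $\widehat E_{A^p}$, hence differs from $\widehat E_{A^p}$ only by a nonzero constant; in particular it is reduced and is divisible by $D_{B_1\cap\{v\}}=y_v$, so $\disc_1$ of it does not vanish identically. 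This gives surjectivity at the step $j=1$, and composing all the steps yields the surjective morphism $\pi_1(\C^A\setminus V_A,x)\to\pi_1(\C^{A^s}\setminus V_{A^s},(x,0))$ asserted in the proposition. The main obstacle is precisely this last verification — making rigorous that deleting an interior lattice point of $N$ restricts the reduced principal $A$-determinant to a constant multiple of itself, i.e.\ that it introduces no extra boundary factor and leaves the multiplicities $m(\,\cdot\,,\Gamma)$ unchanged. This is also where the hypothesis $\dim A\le 2$ is genuinely needed: in higher dimension the points of $A^s\setminus A^p$ may lie in the relative interior of a proper face of $N$, and deleting such a point does interact with the face discriminants, so the restriction of the principal determinant is no longer a constant multiple of the expected one.
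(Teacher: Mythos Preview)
Your overall architecture matches the paper's: reduce to $A=A^p$ with no interior point, add a single interior lattice point, then invoke the machinery already in place. The gap is in the $j=1$ step, and it is not merely a matter of ``making rigorous'' --- the assertion you want is false as stated.

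You claim that for an \emph{arbitrary} interior lattice point $\alpha^{(1)}$ and an \emph{arbitrary} vertex $v$, the restriction $\widehat E_{B_1}|_{Y_k}$ is a constant multiple of $\widehat E_{A^p}$, and you justify this via Lemma~\ref{lem:SecondaryPolytopeRestriction}. But that lemma concerns the Newton polytope of $E_A$, not of $\widehat E_A$; the passage between them is governed exactly by the multiplicities $m(\,\cdot\,,\Gamma)$, and these \emph{do} change in general when an interior point is adjoined. For an edge $\Gamma$ incident to $v$, the subdiagram volume $v(B_1,\Gamma)$ is the minimal nonzero element of the rank-one semigroup $\N_{B_1}/\Gamma$; if $\alpha^{(1)}$ has smaller lattice distance to $\Gamma$ than every point of $A^p\setminus\Gamma$, this drops, so $m(B_1,\Gamma)<m(A^p,\Gamma)$. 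When that happens the degree-in-$y_1$ bookkeeping from the proof of Proposition~\ref{pro:AZariski} forces $D_{B_1}|_{Y_k}$ to acquire extra edge-discriminant factors, whence $\widehat E_{B_1}|_{Y_k}$ is \emph{not} reduced and your argument for condition~\eqref{item:Theorem2} of Proposition~\ref{Prop:FundamentalGroupMain} collapses. (Your diagnosis of why higher dimension fails --- points landing in proper faces --- is also not quite the mechanism; Example~\ref{ex:counterexample} fails because subdiagram volumes of proper faces change even for an interior $\alpha_8$.)

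The paper repairs this by not taking $\alpha^{(1)}$ arbitrarily. It first shows by elementary planar geometry that one can choose a vertex $\alpha_1$ so that the specific point
\[
\alpha'=\alpha_1+(\alpha_2-\alpha_1)/\ell_2+(\alpha_3-\alpha_1)/\ell_3
\]
lies in $N^\circ$, where $\Gamma_2,\Gamma_3$ are the edges at $\alpha_1$ and $\ell_i$ their lattice lengths. For \emph{this} $\alpha'$ one checks directly that $v(A',\Gamma)=v(A,\Gamma)$ for $\Gamma\in\{\{\alpha_1\},\Gamma_2,\Gamma_3\}$: modulo $(\Gamma_3)_\R$ the point $\alpha'$ coincides with the nearest edge-lattice point on $\Gamma_2$, and symmetrically; and $\alpha'$ is not a vertex of $N(A'\setminus\{\alpha_1\})$. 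Thus the hypotheses of Proposition~\ref{pro:AZariski} hold with the \emph{vertex} $\alpha_1$ as auxiliary point, and once $\alpha'$ is adjoined Theorem~\ref{thm:MainFundamentalGroup} (now applicable since $A'$ has an interior point) finishes the job. So the missing idea is not a sharper analysis of restrictions of $\widehat E$, but the construction of one well-placed interior point.
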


\begin{proof}
If $A$ has dimension one, then $A^p = A^s$. Suppose that $A$ has dimension two.
There is no loss of generality in assuming that $A =  A^p$ and that
$A$ has no relative interior points. 
Let $\alpha_1$ be a vertex of $N$ and, for $i = 2,3$, let $\Gamma_i = \conv(\{\alpha_1, \alpha_i\})$ denote the edges of $N$ incident to $\alpha_1$.
Let $\ell_i$ denote the lattice length of $\Gamma_i$ relative $\Z_{A \cap \Gamma_i}$,
so that $\alpha_1 + (\alpha_i - \alpha_1)/\ell_i$ is the closest point in to $\alpha_1$ in $\Z_{A \cap \Gamma_i}$.
Our first claim is that either $\Z_A$ has no points in $N^\circ$,
or we can find a vertex $\alpha_1$ of $N$ such that
\[
\alpha' = \alpha_1 + (\alpha_2 - \alpha_1)/\ell_2 + (\alpha_3 - \alpha_1)/\ell_3 \in N^\circ.
\]
The proof is elementary planar geometry, and is left to the reader.
%The cases that $N$ is a simplex or that $N$ is a rhombus are straightforward to check, and is left to the reader. For the remaining cases, there is an edge $F_1$ of $N$ such that
%the sum of the interior angles at the two vertices adjacent to $F_1$ is strictly greater than $\pi$.
%Let $F_2$ and $F_3$ be the two edges of $N$ incident to $F_1$.
%If $F_2$ contains a point in $A$ whose height over $F_1$ is less than the height of $F_3$,
%then we can choose $\alpha_1$ as the intersection of $F_1$ and $F_2$, and vice verse with
%the roles of $F_2$ and $F_3$ interchanged. Hence, the only case that remains is when
%$F_2$ and $F_3$ both have lattice length one and the same height above $F_1$.
%In this case, choosing $\alpha_1$ as either of the vertices adjacent to $F_1$, the point $\alpha_k$
%belongs to the line segment connecting the endpoints of $F_2$ and $F_3$.
%Said line segment is either contained in the relative interior of $N$, or $N$ has lattice height
%one (implying that $N$ has no relative interior points). This finishes the proof of the claim.

\smallskip
If $\Z_A$ has no points in $N^\circ$ then $A^s = A^p$ and there is nothing to prove.
If  $\Z_A$ has a point in $N^\circ$, then $\alpha' \in N^\circ$.
Let $A' = A \cup \{\alpha'\}$. By construction, $\alpha'$ is lattice redundant in $\Z_{A'}$.
We claim that $\alpha_1$ can act as an auxiliary point, fulfilling the conditions of
Proposition~\ref{pro:AZariski}. 
It suffices to check that the subdiagram volumes of the faces $\{\alpha_1\}, \Gamma_2,$ and $\Gamma_3$ coincide in $A$ and in $A'$. 
For the face $\{\alpha_1\}$, this follows from that $\alpha'$ is not a vertex of 
$N(A' \setminus \{\alpha_1\})$. For the face $\Gamma_3$,
this holds because $\alpha'$ is equivalent to $(\alpha_2-\alpha_1)/\ell_2$ modulo  $\Z_A \cap (\Gamma_3)_\R$, and vice versa for $\Gamma_2$.
Composing the morphisms obtained from 
Proposition~\ref{pro:AZariski} and Theorem~\ref{thm:MainFundamentalGroup},
noting that the partial face saturation of $A'$ is $A^s$,
we conclude the proposition.
\end{proof}

Proposition~\ref{prop:Dim2FaceSaturations} implies that Theorem~\ref{thm:MainFundamentalGroup}
is suboptimal in the sense that we can possibly, in the statement of the theorem, replace $A^p$ with a larger collection and still 
obtain a surjective morphism $\eta$.
That said, Proposition~\ref{pro:AZariski}
is not sufficiently strong to conclude that we can
replace $A^p$ by $A^s$  in general,
as shown in the following example.

\begin{example}
\label{ex:counterexample}
\begin{figure}[t]
\includegraphics[height=30mm]{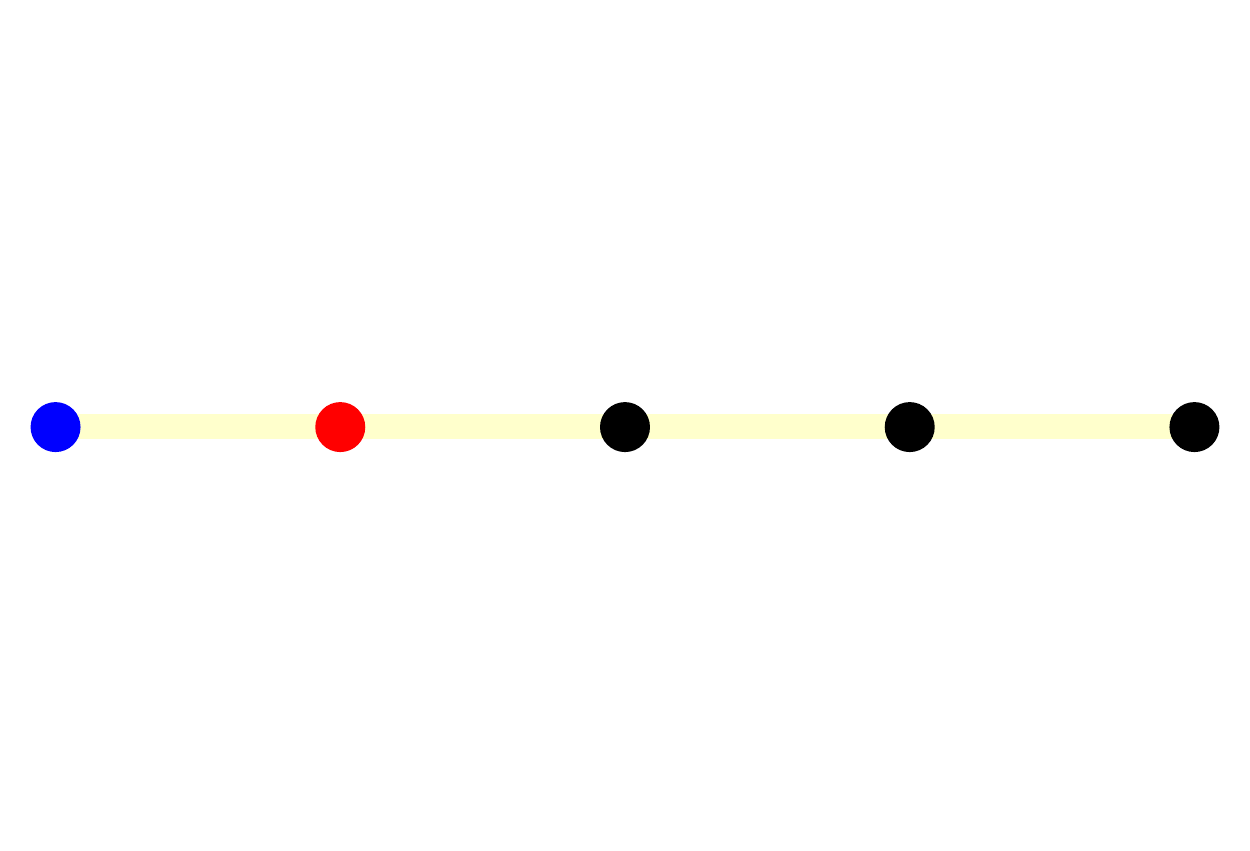}
\hspace{16mm}
\includegraphics[height=30mm]{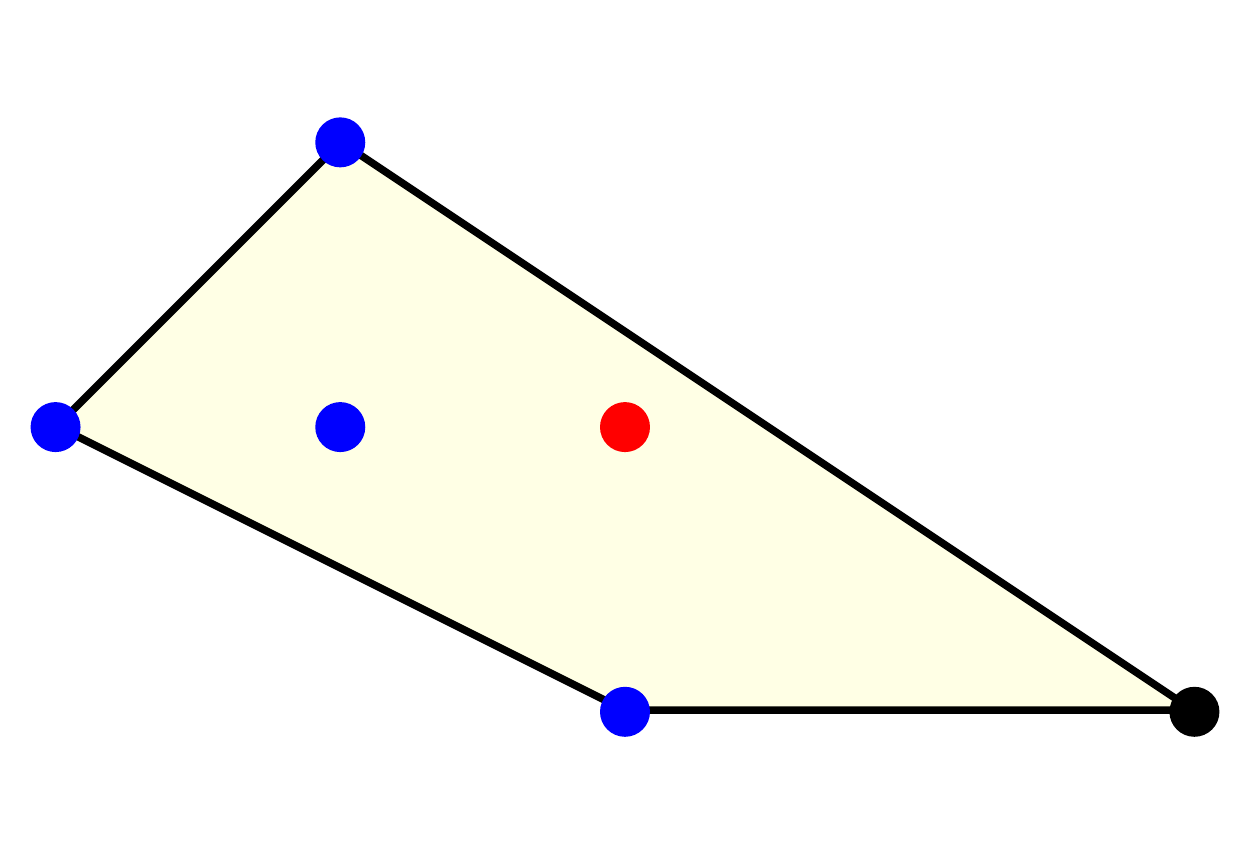}
\caption{The image of $A^s$ under the projections $\pi_1$ and $\pi_2$,
with the images of $\Gamma_1 \cap A$ in blue, the images of $\Gamma_2 \cap A$ in black,
and the images of $\alpha_8$ in red. Notice that $p_i(\alpha_8)$
is a vertex of $\conv(p_i(A^s \setminus \Gamma_i))$ for $i = 1,2$.}
\label{fig:Counterexample}
\end{figure}
Consider the collection $A$, and its face saturation,
\[
A = \left[\begin{array}{ccccccc} 
1 & 1 & 1  & 1 & 1 & 1 & 1\\
0 & 1 & 2 & 1 & 2 & 1 & 0\\
1 & 2 & 0 & 1 & 0 & 0 & 0\\
0 & 0 & 0 & 0 & 2 & 3 & 4
\end{array}\right]
\quad \text{and} \quad 
A^s = A \cup \left[\begin{array}{c} 1 \\ 1 \\ 1 \\ 1 \end{array}\right].
\]
There are two faces of $N$ with relative interior points in $A$, given by
$\Gamma_1 \cap A = \{\alpha_1, \alpha_2, \alpha_3, \alpha_4\}$
and $\Gamma_2 \cap A = \{\alpha_5, \alpha_6, \alpha_7\}$.
All strict faces of $A$ are saturated relative to their
induced lattices. We have that $\Z_A = \Z^4$, and there is a unique point 
$\alpha_8 \in A^s\, \setminus A$. We claim that there is no point in $A$ which can act as the
auxiliary point in Proposition~\ref{pro:AZariski} for $\alpha_8$. 
It suffices to show that we have strict inequalities of subdiagram volumes 
$v(A, \Gamma_i) > v(A^s, \Gamma_i)$ for $i = 1,2$.
Let $p_i$ denote the projection whose kernel is the linear span of $\Gamma_i$.
Then, the strict inequalities of subdiagram volumes is a consequence of
that $p_i(\alpha_8)$ is a vertex
of the polytope $\conv(p_i(A^s \setminus \Gamma_i))$,
see Figure~\ref{fig:Counterexample}.
\end{example}

%%%%%%%%%%%%%%%%%%%%%%%%%%%
\section{Monodromy}
%%%%%%%%%%%%%%%%%%%%%%%%%%%
\label{sec:Monodromy}

In this section, we interchangeably think of $A$ as a subset
of $\Z^{1+n}$ and as a $(1+n)\times k$ integer matrix whose entries are denoted by $\alpha_{ij}$. 
Let $\cD$ denote the \emph{Weyl algebra} over $\C^A$, that is, the
ring of linear partial differential operators in $k$ variables with polynomial
coefficients over $\C$. This ring is generated as a $\C$-algebra by
$y_1,\dots,y_k,\del_1,\dots,\del_k$ subject to the
relations imposed by the Leibniz rule for derivatives. Here $\del_j$
stands for the partial derivative operator $\del/\del y_j$.

We observe that the polynomial ring $\C[\del_1,\dots,\del_k]=\C[\del]$ is a commutative
subring of the noncommutative ring $\cD$. The following $\C[\del]$-ideal is known
as the \emph{toric ideal} associated to $A$.
\begin{equation}
  \label{eqn:IA}
   I_A = \<\del^{u_+} - \del^{u_-} \mid u \in \Z^k, A\cdot u = 0 \>
   \subset \C[\del], 
\end{equation}
where for $u\in \Z^k$, $(u_+)_i = \max(u_i,0)$ and $(u_-)_i =
\max(-u_i,0)$ for $i=1,\dots,k$, so in particular, $u_+,u_-\in \N^k$
and $u=u_+-u_-$. Note that the monomials $\del^{u_+}$ and $\del^{u_-}$
do not have any variables in common by construction.

We remark that the homogeneity assumption on $A$ means that the ideal $I_A$ is
homogeneous with respect to the standard (total degree) $\N$-grading
on $\C[\del]$.

We can use $A$ to define more differential operators. Set
\begin{equation}
  \label{eqn:Eulers}
  \E_i = \sum_{j=1}^k \alpha_{ij} y_j \del_j, \qquad i =1,\dots, 1+n.
\end{equation}
For $\beta\in \C^{1+n}$ we denote $\E-\beta = \{\E_1-\beta_1,\dots,
\E_{1+n}-\beta_{1+n}\}$. These are called \emph{Euler operators}. Note
that $F(y)$ is annihilated by the operators $\E-\beta$ if and only if
\[
  F(z^{\alpha_1}y_1,\dots,z^{\alpha_k}y_k) = z^{\beta}
  F(y_1,\dots,y_k),
\]
for $z$ in a nonempty open subset of $(\C^*)^{1+n}$ (with respect to
the Euclidean topology on $\C^*$). If $F(y) = \sum \lambda_u y^u$ is a
formal power series, then $F$ is annihilated by $\E-\beta$ if and only
if $A\cdot u = \beta$ whenever $\lambda_u \neq 0$.

\begin{definition}
  \label{def:AHypergeometricSystem}
  Let $A$ be an $(1+n)\times k$ integer matrix and let $\beta \in
  \C^{1+n}$. The \emph{$A$-hypergeometric system with parameter
    $\beta$} is the left $\cD_k$-ideal 
  \[
    H_A(\beta) =
    \cD_k \cdot \big( I_A + \< \E-\beta \>  \big).
  \]
  If $x \in \C^A$ is a nonsingular point of the system of
  partial differential operators $H_A(\beta)$, then the space of germs
  of complex holomorphic solutions of $H_A(\beta)$ at $x$ is denoted
  $\Sol_x(A,\beta)$.
  The dimension of the $\C$-vector space $\Sol_x(A,\beta)$ is the
  \emph{holonomic rank} of $H_A(\beta)$, denoted by $\rank(H_A(\beta))$.
\end{definition}

\begin{remark}
  \label{rmk:RegularHolonomic}
  Since $I_A$ is homogeneous, $\cD/H_A(\beta)$ is a regular holonomic
  $\cD$-module for all $\beta$ \cite{Hot91}.
\end{remark}

In order to state the main result in this section, we need one more
definition.

\begin{definition}
  \label{def:nonresonant}
  A vector $\beta\in \C^{1+n}$ is said to be a \emph{resonant parameter of $A$}
  (or simply \emph{$A$-resonant}) if there is
  $\gamma \in \Z^{1+n}$ such that $\beta - \gamma$ lies in the linear
  span of a codimension one face of $\conv(A)$. Parameters that are
  not resonant are called \emph{nonresonant} (or $A$-nonresonant).
\end{definition}

It is well known \cite{Ado94, GKZ89, MMW05} that if 
$\beta$ is nonresonant then $\rank(H_A(\beta))=\vol(A)$.
Note that the set of resonant parameters of $A$ is the union of
an infinite but locally finite collection of hyperplanes in $\C^{1+n}$, so
that nonresonant parameters are \emph{very generic}.

\begin{theorem}
  \label{thm:SolutionSpaceIsomorphism}
Assume that $A$ and $A_k$ span the same lattice, and
$\conv(A)=\conv(A_k)$ (so in particular $\vol(A)=\vol(A_k)$, and
$A$-nonresonance coincides with $A_k$-nonresonance).
Let $\beta$ be nonresonant.
If $x$ is a generic nonsingular point of $H_{A_k}(\beta)$, then $(x,0)$ is a
nonsingular point of $H_A(\beta)$ and the morphism
\[
\Psi\colon \Sol_{(x,0)}(A, \beta) \rightarrow \Sol_x(A_k, \beta),
\]
given by $\Psi(F)(y_1,\dots,y_k) = F(y_1,\dots,y_{k-1},0)$, is an isomorphism.
\end{theorem}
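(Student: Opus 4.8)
The statement asserts that restricting solutions to the coordinate hyperplane $y_k = 0$ gives an isomorphism of solution spaces, under the hypotheses that $A$ and $A_k = A\setminus\{\alpha_k\}$ span the same lattice, have the same convex hull, and $\beta$ is nonresonant. The plan is to first show that $\Psi$ is well-defined and injective by a direct computation with the defining equations of $H_A(\beta)$, and then conclude that it is an isomorphism by a dimension count, invoking the rank-equals-volume theorem for nonresonant parameters together with the hypothesis $\vol(A)=\vol(A_k)$.

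\medskip
\noindent\emph{Step 1: the point $(x,0)$ is nonsingular.} Since $\conv(A)=\conv(A_k)$, the point $\alpha_k$ is not a vertex of $N$. By Lemma~\ref{lem:PADRestriction} and the ensuing remark, the singular locus satisfies $V_A \cap Y_k = V_{A_k}$ as sets (here $Y_k = \{y_k = 0\}$). Hence if $x$ is a nonsingular point of $H_{A_k}(\beta)$, i.e. $x \notin V_{A_k}$, then $(x,0) \notin V_A$, so $(x,0)$ is a nonsingular point of $H_A(\beta)$. This is where the $\conv(A)=\conv(A_k)$ hypothesis is used for nonsingularity; the genericity of $x$ will be used below to control the behavior of the box operators.

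\medskip
\noindent\emph{Step 2: $\Psi$ maps solutions to solutions and is injective.} Let $F$ be a germ of a holomorphic solution of $H_A(\beta)$ near $(x,0)$, and write $G(y_1,\dots,y_{k-1}) = F(y_1,\dots,y_{k-1},0)$. I would argue that $G$ is annihilated by $I_{A_k}$ and by the Euler operators $\E_i' - \beta_i$ for $A_k$. For the Euler operators: the operator $\E_i$ for $A$ is $\sum_{j=1}^{k} \alpha_{ij} y_j \del_j$, and restricting to $y_k = 0$ kills the $j=k$ term (since it carries a factor $y_k$), leaving precisely the Euler operator for $A_k$; this requires justifying that one may interchange "restrict to $y_k=0$" with "apply the differential operator," which is routine since the $j=k$ term of $\E_i$ has a $y_k$ prefactor. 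For the toric ideal: given $u \in \Z^{k}$ with $A u = 0$ and $u_k = 0$, the relation $\del^{u_+} - \del^{u_-}$ involves no $\del_k$, so it passes to the restriction directly; the content is that $I_{A_k}$ is generated by such $u$, which holds because $A_k u' = 0$ for $u' \in \Z^{k-1}$ lifts to $A u = 0$ with $u_k = 0$ (using $\Z_A = \Z_{A_k}$ so that no extra relations are forced). Injectivity of $\Psi$ is the crux and is \emph{not} purely formal: a priori $F$ could be divisible by a power of $y_k$. Here I would use the structure of $A$-hypergeometric solutions at a generic point — namely that, for nonresonant $\beta$, $\Sol_{(x,0)}(A,\beta)$ has a basis of canonical series solutions $\sum_{v} c_v y^v$ whose exponents $v$ satisfy $A v = \beta$, and the generic-point hypothesis on $x$ ensures we may choose these adapted to a triangulation of $N$. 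Since $\conv(A) = \conv(A_k)$, every vertex of $N$ lies in $A_k$, so one can select a regular triangulation $T$ of $N$ with vertices in $A_k$ in which $\alpha_k$ is not a vertex of any cell; the associated canonical solutions then have exponents supported away from $\alpha_k$ at lowest order, i.e. their leading terms survive the restriction $y_k = 0$. This forces $\Psi$ to be injective on a spanning set, hence injective. (This is exactly the combinatorial input packaged by Lemma~\ref{lem:SecondaryPolytopeRestriction}.)

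\medskip
\noindent\emph{Step 3: surjectivity by dimension count.} By Step 2, $\Psi$ is an injective $\C$-linear map $\Sol_{(x,0)}(A,\beta) \hookrightarrow \Sol_x(A_k,\beta)$. Since $\beta$ is nonresonant for $A$ (equivalently for $A_k$, as the codimension-one faces of $\conv(A)$ and $\conv(A_k)$ coincide), the rank-equals-volume theorem cited after Definition~\ref{def:nonresonant} gives
\[
\dim_\C \Sol_{(x,0)}(A,\beta) = \vol(A) = \vol(A_k) = \dim_\C \Sol_x(A_k,\beta),
\]
where the middle equality is the hypothesis (a consequence of $\conv(A)=\conv(A_k)$ and $\Z_A = \Z_{A_k}$). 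An injective linear map between finite-dimensional vector spaces of equal dimension is an isomorphism, so $\Psi$ is an isomorphism.

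\medskip
\noindent\textbf{The main obstacle.} The genuinely nontrivial step is injectivity (Step 2): ruling out that some solution $F$ vanishes identically on $Y_k$. The formal verification that $\Psi$ lands in $\Sol_x(A_k,\beta)$ is bookkeeping, and surjectivity is a one-line dimension count once injectivity is in hand. For injectivity one must exploit the fact that at a generic base point the solution space is spanned by canonical series solutions adapted to a triangulation of $N$ by lattice points of $A_k$ (possible precisely because $\conv(A)=\conv(A_k)$ and $\Z_A=\Z_{A_k}$), so that the lowest-order exponents of a basis of solutions do not involve the coordinate $y_k$ and therefore survive restriction. Carrying this out carefully — and verifying that "generic nonsingular point" can be chosen compatibly with such a triangulation — is the heart of the argument.
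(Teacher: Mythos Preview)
Your approach is correct but takes the dual route to the paper's. You prove \emph{injectivity} of $\Psi$ directly (via canonical series adapted to a triangulation with vertices in $A_k$) and then conclude surjectivity by the dimension count. The paper instead proves \emph{surjectivity} of $\Psi$ directly: given $\psi \in \Sol_x(A_k,\beta)$, it writes down an explicit preimage
\[
F(y_1,\dots,y_k) = \sum_{\ell \geq 0} \frac{y_k^{\ell}}{\ell!}\,\psi_\ell(y_1,\dots,y_{k-1}),
\]
where $\psi_\ell = \del^{-\bar u}\psi$ for any $u \in \ker A$ with $u_k = \ell$, using the antiderivative isomorphisms of Lemma~\ref{lemma:derivativeIso} (which is where nonresonance enters for them). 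They then verify that $F$ is a formal solution of $H_A(\beta)$, invoke regular holonomicity for convergence, and apply the same dimension count to get injectivity.

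Your argument is more combinatorial and ties in neatly with Lemma~\ref{lem:SecondaryPolytopeRestriction}; the paper's is more constructive and yields an explicit inverse, which could be useful for later computations with monodromy matrices. Two small points to tighten in your write-up: first, you should say explicitly that the restricted leading monomials are pairwise distinct (not just that they ``survive''), so that the restricted series are linearly independent---this follows because the starting exponents $v$ for distinct simplices of $T$ differ in coordinates other than the $k$th. Second, the claim that the canonical series for $T$ converge in a neighborhood of a point of the form $(x,0)$ deserves a sentence: since $\alpha_k$ is not a vertex of any simplex in $T$, the associated vertex of $\Sigma_A$ lies on the facet $u_k = 0$, and the corresponding convergence region contains points with $|y_k|$ arbitrarily small. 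The paper faces an analogous convergence issue and dispatches it by appealing to regular holonomicity and allowing a perturbation of $x$.
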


Note that since $\vol(A)=\vol(A_k)$ and $\beta$ is nonresonant (for
both $A$ and $A_k$), we have
$\rank(H_A(\beta))=\rank(H_{A_k}(\beta))$, so that the solution spaces
we are interested in have the same vector space dimension. The content
of Theorem~\ref{thm:SolutionSpaceIsomorphism} is not that the two
vector spaces are isomorphic (which is obvious from comparing dimensions), but that evaluating $y_k\mapsto 0$
gives an isomorphism. A proof of the following auxiliary result can be found in~\cite[Lemma~7.11]{DMM10}.

\begin{lemma}
  \label{lemma:derivativeIso}
  If $\beta$ is $A$-nonresonant then for any $\gamma \in \N^k$, right
  multiplication by $\del^\gamma$ induces a $\cD$-module isomorphism
  $\cD/H_A(\beta-A\cdot \gamma) \to \cD/H_A(\beta)$. The induced linear
  transformation of solution spaces
  $\Sol_x(A,\beta) \to \Sol_x(A,\beta-A\cdot \gamma)$ is given by
  differentiation. We denote the inverse of this linear transformation
  by $\del^{-\gamma}$.\qed
\end{lemma}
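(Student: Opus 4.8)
The plan is to treat the three assertions of the lemma separately: that $\phi_\gamma\colon\cD/H_A(\beta-A\cdot\gamma)\to\cD/H_A(\beta)$, $[P]\mapsto[P\,\del^\gamma]$, is a well-defined morphism of left $\cD$-modules; that it is an isomorphism; and that the induced transformation of solution spaces is $F\mapsto\del^\gamma F$. Well-definedness amounts to the inclusion $H_A(\beta-A\cdot\gamma)\,\del^\gamma\subseteq H_A(\beta)$. The binomial generators of $I_A$ lie in the commutative ring $\C[\del]$ and hence commute with $\del^\gamma$, so those present no problem. For the Euler operators, the relation $[\E_i,\del_j]=-\alpha_{ij}\del_j$ iterates to $\E_i\,\del^\gamma=\del^\gamma\bigl(\E_i-(A\cdot\gamma)_i\bigr)$, whence $\bigl(\E_i-(\beta-A\cdot\gamma)_i\bigr)\del^\gamma=\del^\gamma(\E_i-\beta_i)$, which lies in $H_A(\beta)$ because $H_A(\beta)$ is a left ideal. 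The same computation, together with the observation that $\del^\gamma-\del^{\gamma'}\in I_A$ whenever $A\cdot\gamma=A\cdot\gamma'$, shows that $\phi_\gamma$ depends only on $A\cdot\gamma$ and that $\phi_{\gamma+\gamma'}=\phi_\gamma\circ\phi_{\gamma'}$. Writing $\gamma=\sum_j\gamma_je_j$, it therefore suffices to prove that each map $\phi_{e_j}\colon\cD/H_A(\beta-\alpha_j)\to\cD/H_A(\beta)$ is an isomorphism, where $\alpha_j$ denotes the $j$-th column of $A$.

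The assertion about solutions is then formal. A germ $F\in\Sol_x(A,\beta)$ is the same datum as a $\cD$-linear map from $\cD/H_A(\beta)$ to the ring of holomorphic germs at $x$, sending a class $[Q]$ to the germ obtained by applying the operator $Q$ to $F$. Precomposing this map with $\phi_\gamma$ replaces $Q$ by $Q\,\del^\gamma$, so it sends $[Q]$ to $Q$ applied to $\del^\gamma F$; in particular $\del^\gamma F$ is annihilated by $H_A(\beta-A\cdot\gamma)$, and the induced transformation $\Sol_x(A,\beta)\to\Sol_x(A,\beta-A\cdot\gamma)$ is $F\mapsto\del^\gamma F$. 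Its inverse is, by definition, $\del^{-\gamma}$. Note that $\beta-A\cdot\gamma$ is again nonresonant, since $A\cdot\gamma\in\Z^{1+n}$ and resonance is invariant under translation by $\Z^{1+n}$; so both solution spaces have the same dimension $\vol(A)$, consistent with $\phi_\gamma$ being an isomorphism.

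The heart of the matter is that each $\phi_{e_j}$ is an isomorphism, and this is precisely where nonresonance is used; the route I would take is via Euler--Koszul homology, in the spirit of the cited reference and \cite{MMW05}. One realizes $\cD/H_A(\beta)$ as the zeroth Euler--Koszul homology $H_0(\E-\beta;S_A)$ of the toric ring $S_A=\C[\del]/I_A$, and identifies $\phi_{e_j}$ with the map on $H_0$ induced by multiplication by $\del_j$ on $S_A$, fitting into a short exact sequence of $\Z^{1+n}$-graded $S_A$-modules whose third term is (a twist of) $S_A/\del_j S_A$:
\[
0\longrightarrow S_A(-\alpha_j)\xrightarrow{\ \del_j\ }S_A\longrightarrow S_A/\del_j S_A\longrightarrow 0 .
\]
The degree set of $S_A/\del_j S_A$ is $\N_A\setminus(\alpha_j+\N_A)$, and its Zariski closure, the quasidegree set, is a finite union of $\Z^{1+n}$-translates of linear spans of proper faces of $\conv(A)$ --- faces not containing $\alpha_j$. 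Each such linear span is contained in the linear span of some facet of $\conv(A)$, so the hypothesis that $\beta$ is nonresonant (Definition~\ref{def:nonresonant}) says exactly that $\beta$ lies outside this quasidegree set. The Euler--Koszul vanishing theorem of \cite{MMW05} then yields $H_\bullet(\E-\beta;S_A/\del_j S_A)=0$, so the long exact sequence collapses to an isomorphism between the Euler--Koszul homologies of $S_A(-\alpha_j)$ and of $S_A$; under the grading-shift identification this is $\phi_{e_j}\colon\cD/H_A(\beta-\alpha_j)\xrightarrow{\ \sim\ }\cD/H_A(\beta)$, and composing over the coordinates of $\gamma$ gives the general case.

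I expect the main obstacle to be the quasidegree computation for $S_A/\del_j S_A$, that is, verifying that its quasidegree set is swept out by translated spans of proper faces of $\conv(A)$ --- this is exactly the point at which Definition~\ref{def:nonresonant} becomes unavoidable --- together with keeping the grading-shift bookkeeping of the Euler--Koszul formalism straight. As a shortcut one could instead invoke that for nonresonant $\beta$ the (regular holonomic) modules $\cD/H_A(\beta)$ and $\cD/H_A(\beta-\alpha_j)$ are simple --- the $\cD$-module counterpart of the irreducibility of $A$-hypergeometric monodromy, cf.\ \cite{Beu11,SW12} --- so that any nonzero $\cD$-linear map between them is automatically an isomorphism; here $\phi_{e_j}$ is nonzero because $\del_j\notin H_A(\beta)$, for instance since the canonical series solutions of $H_A(\beta)$ involve $y_j$ nontrivially and so are not annihilated by $\del_j$.
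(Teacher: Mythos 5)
Your argument is correct, and it is essentially the proof the paper relies on: the paper gives no proof of Lemma~\ref{lemma:derivativeIso}, deferring to \cite[Lemma~7.11]{DMM10}, and that reference establishes the isomorphism exactly via the Euler--Koszul long exact sequence for $0\to S_A(-\alpha_j)\to S_A\to S_A/\del_j S_A\to 0$ together with the vanishing of Euler--Koszul homology for parameters outside the quasidegree set of $S_A/\del_j S_A$, which nonresonance guarantees since that quasidegree set consists of integer translates of spans of proper faces. Your preliminary reductions (well-definedness via $\E_i\,\del^\gamma=\del^\gamma(\E_i-(A\cdot\gamma)_i)$, independence of the choice of $\gamma$ with fixed $A\cdot\gamma$, and the contravariant passage to solution spaces) are also sound.
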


\begin{remark}
  \label{rmk:antiderivativesAreWellDefined}
  In the previous statement, the precise form of the inverse linear
  transformation between solution spaces depends on $\beta$, and
  denoting it by $\del^{-\gamma}$ is an abuse of notation. This is
  justified because whenever we write $\del^{-\gamma}F$, the
  parameter of the hypergeometric function $F$ will be
  understood. 
   Since $\del_i$ and $\del_j$ commute for $i\neq j$,
  $\del_i$ and $\del_j^{-1}$ also commute. Identities such as
  $\del_i^{-1}\del_i^{-1} = \del_i^{-2}$ hold as well, since both are
  inverses of $\del_i^2$. It follows that, as long as
   $\beta$ is nonresonant, $\del^u$ is well-defined for $u\in \Z^k$.

  If $A\cdot u = 0$ and $\beta$ is nonresonant, $\del^u$ acts as the
  the identity on $\Sol_x(A,\beta)$: if $F$ is a
  nonzero solution of 
  $H_A(\beta)$, then $\del^{u_-} (\del^{u}F-F) =
  \del^{u_-}F-\del^{u_+}F = 0$. Therefore, $\del^uF-F$ is in the
  kernel of the linear isomorphism $\del^{u_-}$, whence $\del^uF=F$.
\end{remark}

\begin{proof}[Proof of Theorem~\ref{thm:SolutionSpaceIsomorphism}]
  Since $\alpha_k$ is not a vertex of $\conv(A)$, the singular locus
  of $H_A(\beta)$ does not contain the hyperplane $y_k=0$. Thus, if
  $x\in \C^{k-1}$ is a sufficiently generic nonsingular point of
  $H_{A_k}(\beta)$, then $(x,0)$ is a nonsingular point of
  $H_A(\beta)$.

  For $u=(u_1,\dots,u_k)\in \Z^k$, denote
  $\bar{u}=(u_1,\dots,u_{k-1})\in \Z^{k-1}$.
  
  Let $\psi=\psi(y_1,\dots,y_{k-1}) \in \Sol_x(A_k,\beta)$. We
  consider the linear isomorphisms from
  Lemma~\ref{lemma:derivativeIso}, applied to $A_k$ (and \emph{not}
  $A$). Let $u\in \Z^k$ such that $A\cdot u = 0$ and $u_k=\ell\geq 0$. 
  We set $\psi_\ell = \del^{-\bar{u}}\psi$. Note that if $v\in
  \Z^k$ with $A\cdot v=0$ and $v_k=\ell$, then $A_k
  \cdot(\bar{u}-\bar{v})=0$. It follows that
  $\del^{\bar{u}-\bar{v}}\psi=\psi$, or equivalently
  $\del^{-\bar{u}}\psi=\del^{-\bar{v}}\psi$. In other words,
  $\psi_\ell$ does not depend on the choice of $u$. Note that
  $\psi_0=\psi$. 

  Given $\ell > 0$, if there is no $u \in \Z^k$ with $A\cdot u = 0$
  and $u_k=\ell$, we set $\psi_\ell = 0$. By construction,
  $\psi_\ell \in \Sol_x(A_k,\beta-A_k\cdot
  \bar{u})=\Sol_x(A_k,\beta-\ell \alpha_k)$.
  We define
  \[
    F = F(y_1,\dots,y_k) = \sum_{\ell=0}^\infty
    \frac{y_k^\ell}{\ell!}\,\psi_\ell(y_1, \dots, y_{k-1}).
  \]
  We claim that $F$ is a formal solution of $H_A(\beta)$. By
  construction, each summand of $F$ is annihilated by $\E-\beta$, so
  consider $v\in \Z^k$ and the operator $\del^{v_+}-\del^{v_-}$. If
  $v_k=0$, then this operator belongs to $I_{A_k}$ and, therefore,
  annihilates every summand of $F$. 
  
  Assume now that $v_k \neq 0$, say $v_k>0$.
  Let $u \in \Z^k$ such that $A\cdot u = 0$ and $u_k=\ell\geq
  v_k$. Then $(u-v)_k = \ell-v_k$ and, hence,
\[
  \psi_{\ell-v_k} = \del^{-\overline{(u-v)}}\psi =
  \del^{-\bar{u}+\bar{v}}\psi = \del^{-\bar{v}_-} \del^{\bar{v}_+}
    \del^{-\bar{u}}\psi = \del^{-\bar{v}_-} \del^{\bar{v}_+}\psi_\ell.
\]
  It follows that $\del^{\bar{v}_+}  \psi_\ell = \del^{\bar{v}_-}
  \psi_{\ell-v_k}$. Now,
\[
    \del^{v_+} F   = \sum_{\ell\geq v_k}
                    \frac{y_k^{\ell-v_k}}{(\ell-v_k)!}
                    \del^{\bar{v}_+}\psi_\ell 
                  = \sum_{\ell\geq v_k}
                   \frac{y_k^{\ell-v_k}}{(\ell-v_k)!}
                  \del^{\bar{v}_-}\psi_{\ell-v_k} 
                   = \del^{\bar{v}_-}
                    \sum_{\ell\geq v_k}
                   \frac{y_k^{\ell-v_k}}{(\ell-v_k)!}
                   \psi_{\ell-v_k}  
                  = \del^{v_-} F.
\]
  We have verified that $F$ is a formal solution of $H_A(\beta)$. By
  construction, $F(x,0) = \psi(x)$,
  so that, in particular,  $F$ is defined at $(x,0)$. 

  If  $\psi$ is given as a Nilsson series converging in a neighborhood
  of $x \in \C^A$, then $F$ is by construction a formal Nilsson
  series. Since $\cD/H_A(\beta)$ is a regular holonomic $\cD$-module, and
  $F(x,0)$ is defined, we may assume that $F$ converges in an open
  neighborhood of $(x,0)$ (after perturbing $x$ if necessary).
  Thus, if we apply this construction to the elements a basis of
  $\Sol_x(A_k,\beta)$, then we obtain a set of solutions of
  $H_A(\beta)$ with a common domain of convergence in a neighborhood
  of $(x,0)$, whose images under $y_k \mapsto 0$ are the elements of
  the basis of $\Sol_x(A_k,\beta)$. It follows that the linear
  transformation
  $\Psi: \Sol_{(x,0)}(A,\beta) \to \Sol_x(A_k,\beta)$ is
  surjective. Since both vector spaces involved have dimension
  $\vol(A)$, we conclude that $\Psi$ is an isomorphism.
\end{proof}

\begin{corollary}
\label{cor:MainMonodromyInductionStep}
Let $\alpha_k \in A$ be lattice redundant, and assume that there is
a face $\Gamma \in \cF_{\intt}$, containing $\alpha_k$,
which has a relative interior point $\alpha_1$ distinct form $\alpha_k$. 
Let $\beta$ be nonresonant. If $x$ is a generic nonsingular point of $H_{A_k}(\beta)$, so that $(x,0)$ is a
nonsingular point of $H_A(\beta)$, then
\[
\Mon_{(x,0)}(A, \beta) =  \Mon_x(A_k, \beta).
\]
\end{corollary}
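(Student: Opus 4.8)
The plan is to combine the two surjectivity-of-fundamental-groups results already in hand with the local-system description of monodromy and the solution-space isomorphism of Theorem~\ref{thm:SolutionSpaceIsomorphism}. Recall that the monodromy group $\Mon_x(A,\beta)$ is the image of the monodromy representation $\pi_1(\C^A\setminus V_A, x)\to\mathrm{GL}(\Sol_x(A,\beta))$ attached to the regular holonomic $\cD$-module $\cD/H_A(\beta)$ (Remark~\ref{rmk:RegularHolonomic}). First I would observe that under the hypotheses of the corollary, $\alpha_k$ lattice redundant guarantees in particular $\Z_A=\Z_{A_k}$ and $\conv(A)=\conv(A_k)$ (since $\alpha_k$ is not a vertex, being a relative interior point of the face $\Gamma$), so Theorem~\ref{thm:SolutionSpaceIsomorphism} applies: for generic nonsingular $x$, the point $(x,0)$ is nonsingular for $H_A(\beta)$ and restriction $y_k\mapsto 0$ gives a linear isomorphism $\Psi\colon\Sol_{(x,0)}(A,\beta)\xrightarrow{\sim}\Sol_x(A_k,\beta)$.

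Next I would set up the compatibility of the two local systems. The inclusion $\iota\colon\C^{A_k}\setminus V_{A_k}\hookrightarrow\C^A\setminus V_A$, $x\mapsto(x,0)$ (well-defined since, by Lemma~\ref{lem:PADRestriction} and the ensuing remark, $V_A\cap Y_k=V_{A_k}$ as sets) pulls back the solution local system of $H_A(\beta)$ to that of $H_{A_k}(\beta)$, with the pullback identification given fiberwise exactly by $\Psi$. Concretely, any horizontal (flat) section of the $A$-system over a loop contained in $Y_k$ restricts, under $y_k\mapsto 0$, to a horizontal section of the $A_k$-system: this is because the construction $\psi\mapsto F$ in the proof of Theorem~\ref{thm:SolutionSpaceIsomorphism} is manifestly compatible with analytic continuation, $\Psi$ being nothing but evaluation at $y_k=0$. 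Hence the diagram
\[
\begin{tikzcd}
\pi_1(\C^{A_k}\setminus V_{A_k}, x) \arrow[r, "\eta"] \arrow[d, "\rho_{A_k\text{,}\beta}"'] & \pi_1(\C^{A}\setminus V_{A}, (x,0)) \arrow[d, "\rho_{A\text{,}\beta}"] \\
\mathrm{GL}(\Sol_x(A_k,\beta)) & \mathrm{GL}(\Sol_{(x,0)}(A,\beta)) \arrow[l, "\Psi_*"]
\end{tikzcd}
\]
commutes, where $\eta$ is the canonical morphism and $\Psi_*$ denotes conjugation by the isomorphism $\Psi$. It follows that $\Psi_*(\Mon_{(x,0)}(A,\beta))=\rho_{A_k,\beta}(\eta(\pi_1(\C^{A_k}\setminus V_{A_k},x)))$.

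To finish, I would invoke Proposition~\ref{prop:InductionStep}: its hypotheses are exactly those of the corollary, so $\eta$ is surjective. Therefore $\rho_{A_k,\beta}(\eta(\pi_1(\C^{A_k}\setminus V_{A_k},x)))=\rho_{A_k,\beta}(\pi_1(\C^{A_k}\setminus V_{A_k},x))=\Mon_x(A_k,\beta)$, and combining with the previous display gives $\Psi_*(\Mon_{(x,0)}(A,\beta))=\Mon_x(A_k,\beta)$; since $\Psi_*$ is a group isomorphism $\mathrm{GL}(\Sol_{(x,0)}(A,\beta))\to\mathrm{GL}(\Sol_x(A_k,\beta))$, we conclude $\Mon_{(x,0)}(A,\beta)\simeq\Mon_x(A_k,\beta)$, i.e., the asserted equality of monodromy groups (up to the canonical conjugation). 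The main obstacle, and the step deserving the most care, is verifying the commutativity of the square above—that is, that $\Psi$ genuinely intertwines the two monodromy representations rather than merely identifying the abstract solution spaces. This requires checking that the series $F$ built from $\psi$ in Theorem~\ref{thm:SolutionSpaceIsomorphism} continues along a loop $\gamma\subset Y_k$ to the series built from the continuation of $\psi$ along $\gamma$; this follows from regular holonomicity (so that analytic continuation is governed by the Nilsson series structure) together with the fact that the recipe $\psi\mapsto(\psi_\ell)_\ell$ uses only the $\cD$-module operations on $A_k$, which are preserved by monodromy, but it should be written out. A secondary point is that ``generic nonsingular $x$'' suffices because the monodromy group does not depend on the choice of nonsingular base point within a connected component and, after the harmless perturbations allowed by regular holonomicity, $x$ may be taken in the relevant component.
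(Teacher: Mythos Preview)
Your argument is correct and follows essentially the same route as the paper: use Theorem~\ref{thm:SolutionSpaceIsomorphism} to identify the two solution spaces via restriction to $y_k=0$, use Proposition~\ref{prop:InductionStep} for surjectivity of $\eta$, and conclude that the monodromy representations have the same image (the paper phrases this concretely by fixing a basis $F_1,\dots,F_r$ and a generating set $G$ of loops rather than via your commuting square, but the content is identical, and you are actually more explicit than the paper about why restriction intertwines the two representations). Two small slips to clean up: $\alpha_k$ is not asserted to be a relative interior point of $\Gamma$---rather, lattice redundancy alone already prevents $\alpha_k$ from being a vertex of $N$, which is what you need; and the expression $\rho_{A_k,\beta}\bigl(\eta(\pi_1(\C^{A_k}\setminus V_{A_k},x))\bigr)$ is a type mismatch, since $\eta$ lands in $\pi_1(\C^A\setminus V_A,(x,0))$---what the diagram gives is $\Psi_*\circ\rho_{A,\beta}\circ\eta=\rho_{A_k,\beta}$, and then surjectivity of $\eta$ yields $\Psi_*(\Mon_{(x,0)}(A,\beta))=\Mon_x(A_k,\beta)$ directly.
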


\begin{proof}
Let $r = \vol(A)$ denote the lattice volume of $A$. Since $\beta$ is nonresonant,
the solution space $\Sol_{(x,0)}(A, \beta)$ has dimension $r$.
By Theorem~\ref{thm:SolutionSpaceIsomorphism}, we can choose
a basis $F_1, \dots, F_r$ of $\Sol_{(x,0)}(A, \beta)$ such 
that $F_1(\bar y,0), \dots, F_r(\bar y,0)$ is a basis for the solution
space of $\Sol_x(A_k, \beta)$. By Proposition~\ref{prop:InductionStep}, we find a set 
$G \subset \pi_1\big(\C^{A_k} \setminus V_{A_k}, x\big)$ which generates
both $\pi_1\big(\C^{A_k} \setminus V_{A_k}, x\big)$ and $\pi_1\big(\C^A \setminus V_A, (x,0)\big)$.
It follows that both $\Mon_{(x,0)}(A, \beta)$ and $\Mon_x(A, \beta)$ are generated
by monodromy matrices obtain by analytic continuation of the functions $F_1, \dots, F_r$
along the paths $\gamma \in G$. Hence, the two monodromy groups have the same set
of generators.
\end{proof}

\begin{theorem}
\label{thm:MainMonodromy2}
If $\beta$ is a nonresonant parameter, then 
\[
\Mon_{x}(A, \beta) \simeq \Mon_{(x,0)}(A^s, \beta).
\]
\end{theorem}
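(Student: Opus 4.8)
The plan is to obtain $A^s$ from $A$ by adjoining the points of $A^s\setminus A$ one at a time --- first those of $A^p\setminus A$, then the remaining ones --- and to control the monodromy group through each single adjunction $A'\to A'\cup\{\gamma\}$. Every such $\gamma$ lies in the relative interior of a positive‑dimensional face $\Gamma$ of $N$, hence is not a vertex, $\Z_{A'\cup\{\gamma\}}=\Z_{A'}$, $\conv(A'\cup\{\gamma\})=N$, and $\gamma$ is lattice redundant in $A'\cup\{\gamma\}$. So Theorem~\ref{thm:SolutionSpaceIsomorphism} applies: for a sufficiently generic (hence, up to connectedness of hypersurface complements, arbitrary) base point, restriction $y_\gamma\mapsto 0$ is an isomorphism $\Psi\colon\Sol_{(x,0)}(A'\cup\{\gamma\},\beta)\xrightarrow{\sim}\Sol_x(A',\beta)$. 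Loops that stay inside $\{y_\gamma=0\}$ act on a solution $F$ exactly as on its restriction $\Psi(F)$; since $\Psi$ is an isomorphism, the pushforward $\eta\colon\pi_1(\C^{A'}\setminus V_{A'},x)\to\pi_1(\C^{A'\cup\{\gamma\}}\setminus V_{A'\cup\{\gamma\}},(x,0))$ therefore intertwines the two monodromy representations, and $\Psi$ conjugates $\Mon_{(x,0)}(A'\cup\{\gamma\},\beta)$ \emph{onto} $\Mon_x(A',\beta)$ precisely when every gom of $\pi_1(\C^{A'\cup\{\gamma\}}\setminus V_{A'\cup\{\gamma\}})$ around a component of $V_{A'\cup\{\gamma\}}$ acts, after transport by $\Psi$, as an element of $\Mon_x(A',\beta)$. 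Composing these isomorphisms over all the steps gives the theorem.

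When $\gamma$ is interior to a face that, for the collection at hand, lies in $\cF_{\intt}$ and carries another interior point, this is exactly Corollary~\ref{cor:MainMonodromyInductionStep}: by Proposition~\ref{prop:InductionStep} the map $\eta$ is surjective, so a generating set of $\pi_1(\C^{A'}\setminus V_{A'})$ pushes forward to one of $\pi_1(\C^{A'\cup\{\gamma\}}\setminus V_{A'\cup\{\gamma\}})$, and both monodromy groups are generated by analytic continuation of a common basis of solutions along the same loops. These are exactly the steps $A\to A^p$ (plus later steps at faces already partially saturated), so in particular Theorem~\ref{thm:MainMonodromy2} follows when $A$ has an interior point, since then $A^p=A^s$.

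The only remaining steps adjoin the first interior lattice point $\gamma$ of a face $\Gamma\notin\cF_{\intt}$; here Proposition~\ref{pro:AZariski} has no admissible auxiliary point (Example~\ref{ex:counterexample}) and $\eta$ need not be onto, so the claim must be checked at the level of the representation. Every component $Z$ of $V_{A'\cup\{\gamma\}}$ is a factor of a face discriminant, hence an affine cone over a projective variety, hence contains the origin; with Lemma~\ref{lem:PADRestriction} this forces $Z\cap\{y_\gamma=0\}$ to be a nonempty union of components of $V_{A'}$. If $Z$ comes from a face not containing $\gamma$, then $Z=Z'\times\C_{y_\gamma}$ and a gom around $Z$ is the $\eta$‑pushforward of a gom around $Z'$. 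If $Z$ comes from a face $F\supseteq\Gamma$, choose a component $C\subseteq Z\cap\{y_\gamma=0\}$ of $V_{A'}$ along which $Z$ is smooth, transverse to $\{y_\gamma=0\}$, and disjoint from the other components of $V_{A'\cup\{\gamma\}}$; near such a point the pair $(\{y_\gamma=0\},Z)$ looks locally like two complex lines in $\C^2$, and the computation $\pi_1(\C^2\setminus\{\text{line}\})\simeq\Z$ shows a meridian of $Z$ is freely homotopic in $\C^{A'\cup\{\gamma\}}\setminus V_{A'\cup\{\gamma\}}$ to the meridian of $C$ inside $\{y_\gamma=0\}$, hence to an $\eta$‑pushforward. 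Equivalently: the monodromy around $Z$ is of Picard--Lefschetz type (a generic point of a nondefective face discriminant is a polynomial with a single node), and since the polynomial is unchanged at $y_\gamma=0$, the vanishing cycle --- and the reflection it induces on $\Sol$ --- already occurs in $\Mon_x(A',\beta)$. Either way, the $\Psi$‑image of every gom around $Z$ lies in $\Mon_x(A',\beta)$, completing the step.

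The heart of the proof is the smoothness/transversality input used above: one must show every component of $V_{A'\cup\{\gamma\}}$ that genuinely involves $y_\gamma$ has a smooth point on $\{y_\gamma=0\}$, off all other components, at which it meets $\{y_\gamma=0\}$ with multiplicity one. This is a statement about how $\widehat E_{A'\cup\{\gamma\}}$ degenerates at $y_\gamma=0$: by Lemma~\ref{lem:PADRestriction} each $D_{(A'\cup\{\gamma\})\cap F}\big|_{y_\gamma=0}$ factors into lower face discriminants, and one must isolate among these an ``unshared'' factor whose generic zero is a simple smooth point of $V_{A'\cup\{\gamma\}}$ and verify vanishing order one there. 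The defective edge cases (for instance, $A'$ defective while $A'\cup\{\gamma\}$ is not) and the exact matching of $\Sol$‑reflections with $\pi_1$‑classes should be routine once this is established, but it is the step that makes Theorem~\ref{thm:MainMonodromy2} reach all the way to $A^s$ whereas the fundamental‑group statement, Theorem~\ref{thm:MainFundamentalGroup2}, stops at $A^p$.
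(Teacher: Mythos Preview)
The paper's own proof is a single sentence: ``a simple induction using Corollary~\ref{cor:MainMonodromyInductionStep}.'' As you correctly identify, that corollary carries exactly the hypotheses of Proposition~\ref{prop:InductionStep}, so the induction only reaches the partial face saturation $A^p$. The occurrence of $A^s$ in the printed statement is almost certainly a typo for $A^p$: the companion Theorem~\ref{thm:MainFundamentalGroup2} is stated for $A^p$, and the paper's derivation of Theorem~\ref{thm:MainMonodromy} from Theorem~\ref{thm:MainMonodromy2} explicitly invokes the implication ``$A$ has a relative interior point $\Rightarrow A^p=A^s$'' --- an argument that would be vacuous if Theorem~\ref{thm:MainMonodromy2} already reached $A^s$. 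Your first two paragraphs therefore reproduce the paper's entire intended argument.

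Your third and fourth paragraphs attempt the genuinely stronger passage $A^p\to A^s$ without an interior-point hypothesis, which the paper does not prove (and which, at the fundamental-group level, is explicitly left open after Proposition~\ref{prop:Dim2FaceSaturations} and in Example~\ref{ex:counterexample}). Your meridian/transversality sketch is a reasonable strategy, but as you yourself concede, the key geometric input --- that every $y_\gamma$-dependent component of $V_{A'\cup\{\gamma\}}$ meets $\{y_\gamma=0\}$ transversally at a smooth point away from the other components --- remains unverified. The Picard--Lefschetz alternative also needs more care: it is not obvious that monodromy around an arbitrary face discriminant in~\eqref{eqn:PrincipalADeterminant2} (as opposed to the top $A$-discriminant) acts by a single reflection, nor that this reflection is already realised in $\Mon_x(A',\beta)$. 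These extensions are interesting but go beyond what the paper actually claims or proves.
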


\begin{proof}
The proof is a simple induction using Corollary~\ref{cor:MainMonodromyInductionStep}.
\end{proof}

\begin{proof}[Proof of Theorem~\ref{thm:MainMonodromy}]
If $A$ has a relative interior point, then $A^p = A^s$.
Hence, the statement follows from Theorem~\ref{thm:MainMonodromy2}.
\end{proof}

%%%%%%%%%%%%%%%%%%%%%%%%%%%
\section{Monomial Curves}
%%%%%%%%%%%%%%%%%%%%%%%%%%%
\label{sec:Lines}

Let us exemplify our main theorems by considering a 
one-dimensional collection
\[
A = \left[\begin{array}{ccccc}  1 & 1 & \dots & 1 & 1\\
0 & \alpha_1 & \dots & \alpha_m  & \delta
\end{array}\right],
\]
where $0 < \alpha_1 < \dots < \alpha_m < \delta$. The toric variety associated to such a one-dimensional collection is called a \emph{monomial curve}. The integer $\delta$ is the \emph{toric degree}
of $A$. There is no loss of generality in assuming that $\gcd(\alpha_1, \dots, \alpha_m, \delta) = 1$,
so that $\Z_A = \Z^2$. In this case,
the partial face saturation and the face saturation of $A$ both coincide with 
the saturation $N\cap \Z_A$.
The space $(\C^*)^A$ is identified with the space of all univariate polynomials
\[
f(z) = y_0 + y_1 z^{\alpha_1} + \dots  + y_m z^{\alpha_m} + y_{m+1} z^\delta.
\]
The reduced principal $A$-determinant is 
$\widehat E_A(y) = y_0 y_{m+1} D_A(y)$,
where $D_A(y)$ is the $A$-discriminant. In particular, there is a map
\[
V\colon \C^A \setminus E_A \rightarrow C_\delta(\C^*),
\]
where $C_\delta(\C^*)$ denotes the configuration space
of $\delta$ distinct points in $\C^*$. Taking fundamental groups,
we obtain the \emph{braid map}
\[
\br\colon \pi_1\big(\C^A \setminus E_A, x\big) \rightarrow \CB_\delta,
\]
where $\CB_\delta$ denotes the cyclic braid group
on $\delta$ strands.

\begin{corollary}
The braid map $\br$ is surjective.
\end{corollary}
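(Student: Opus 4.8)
The plan is to reduce the one-dimensional collection $A$ to its saturation $N\cap\Z_A$ and then invoke a classical surjectivity result for the braid map in the saturated case. First I would observe that since $\gcd(\alpha_1,\dots,\alpha_m,\delta)=1$ we have $\Z_A=\Z^2$, and since $A$ has dimension one, the partial face saturation, the face saturation, and the saturation all coincide: $A^p=A^s=N\cap\Z_A$. The collection $A$ has no interior point in the naive sense of Theorem~\ref{thm:MainFundamentalGroup} (the Newton polytope $N$ is a segment, so every lattice point of $N$ lies on a proper face), but the stronger statement is the relevant one here. I would apply Theorem~\ref{thm:MainFundamentalGroup2}, or rather its one-dimensional specialization: for a one-dimensional collection the two endpoints of $N$ are its vertices, hence are never lattice redundant, but every intermediate lattice point not already in $A$ can be adjoined one at a time as a lattice-redundant point (the face lattices of a segment are just the two endpoints and the segment itself, and adjoining an interior lattice point of $\Z_A$ does not change any of these lattices). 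Here the auxiliary point $\alpha_1$ required by Proposition~\ref{prop:InductionStep} can be taken to be a relative interior point of the unique full-dimensional face $N$ itself, once we are adding at least one interior point; the case $m\ge 1$ guarantees there is already an interior character available, and otherwise $A$ is already saturated. Thus Theorem~\ref{thm:MainFundamentalGroup2} (applied to $A$ with $A^p=N\cap\Z_A$) yields a surjection
\[
\eta\colon \pi_1\big(\C^{A}\setminus V_{A}, x\big)\twoheadrightarrow \pi_1\big(\C^{N\cap\Z_A}\setminus V_{N\cap\Z_A}, (x,0)\big).
\]

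Second, I would identify the braid map on the saturated side. When $A$ is saturated and one-dimensional, $\C^{N\cap\Z_A}\setminus V_{N\cap\Z_A}$ is (up to scaling by the two vertex coordinates $y_0,y_{m+1}$) the space of monic-up-to-units polynomials of degree $\delta$ with nonzero constant term and distinct roots in $\C^*$; this is exactly the source of the map $V$ to $C_\delta(\C^*)$. Here the surjectivity of the induced map on $\pi_1$ onto the cyclic braid group $\CB_\delta$ is classical: the space of all degree-$\delta$ polynomials with distinct roots fibers over $C_\delta(\C)$ and the full braid group $B_\delta$ is realized as its fundamental group; restricting to roots in $\C^*$ only adds the extra loop tracking a root winding around the origin, which together with the standard braid generators generates $\CB_\delta$. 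Since the coordinates $y_0$ and $y_{m+1}$ contribute only central (scaling) factors which the braid map kills, the braid map out of $\pi_1\big(\C^{N\cap\Z_A}\setminus V_{N\cap\Z_A}\big)$ is surjective.

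Finally, I would combine the two: the braid map $\br$ on $\pi_1\big(\C^A\setminus E_A, x\big)$ factors, via the canonical surjection $\eta$, through the braid map on the saturated side, because the map $V$ to the configuration space is the composite of the coordinate inclusion $\C^A\hookrightarrow \C^{N\cap\Z_A}$ with the corresponding saturated map $V$ (adding a character with coefficient $0$ does not change which polynomial, viewed up to its zero set in $\C^*$, we are looking at). Hence $\br$ is a composite of two surjections, so it is surjective.

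I expect the main obstacle to be the bookkeeping in the first step: verifying cleanly that for a one-dimensional $A$ every point of $N\cap\Z_A$ not in $A$ can be adjoined through a chain of lattice-redundant additions each satisfying the hypothesis of Proposition~\ref{prop:InductionStep} (in particular that at each stage there is a relative interior point of a face in $\cF_{\intt}$ distinct from the point being removed). This is essentially trivial once one notes that $\cF_{\intt}$ for a segment is either empty (if $A$ is already just the two endpoints, in which case $A$ is saturated and there is nothing to do) or all of $\cF$, and that the segment $N$ always has a lattice interior point available as auxiliary; but it must be stated carefully. The second step is genuinely classical and I would simply cite the standard description of the fundamental group of the space of squarefree polynomials; the third step is formal.
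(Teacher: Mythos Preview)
Your overall strategy matches the paper's: reduce to the saturated case via the Zariski-type surjection on fundamental groups, then handle the saturated case directly. Two points are worth flagging.

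First, your claim that ``$A$ has no interior point in the naive sense of Theorem~\ref{thm:MainFundamentalGroup}'' is wrong. In this paper, an interior point of $A$ means an element of $A$ lying in the relative interior of $N=\conv(A)$; for a segment the relative interior is the open segment, so each $\alpha_1,\dots,\alpha_m$ is an interior point whenever $m\ge 1$. Thus Theorem~\ref{thm:MainFundamentalGroup} applies directly, and the detour through Theorem~\ref{thm:MainFundamentalGroup2} (which carries the same interior-point hypothesis anyway) and the hand-verification of Proposition~\ref{prop:InductionStep} is unnecessary. Your induction argument is not incorrect, but it is built on a misconception about the hypothesis you are trying to circumvent.

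Second, your treatment of the saturated case is more elaborate than needed. The paper simply observes that when $A$ is saturated, the Vieta map $C_\delta(\C^*)\to \C^A\setminus E_A$, sending a configuration of $\delta$ points in $\C^*$ to the monic polynomial vanishing on it, is a continuous section of $V$; hence $V_*$ is split surjective on $\pi_1$. This one-line observation replaces your appeal to the classical description of the braid group as the fundamental group of the space of squarefree polynomials together with the scaling discussion. Your factorization step (that the braid map for $A$ is the composite of $\eta$ with the braid map for $A^s$) is correct and is exactly how the paper concludes.
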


\begin{proof}
If $A$ is saturated, then there is a map $C_\delta(\C^*) \rightarrow \C^A \,\setminus E_A$
which takes a set of $\delta$ points to the monic polynomial of degree $\delta$ vanishing on said set.
This fact, and Theorem~\ref{thm:MainFundamentalGroup}, imply that the
braid map is surjective also in the general case.
\end{proof}

The braid map is not injective.  This is primarily an effect of working with the
principal $A$-determinant rather then the $A$-discriminant. For example,
the cycle 
\[
t \mapsto e^{2 \pi i t} y
\]
is nontrivial (and acts diagonally, yet nontrivially, in monodromy) in $\C^A \setminus E_A$.
We conclude that there is a surjective map
\[
\bar \br\colon \pi_1\big(\C^A \setminus E_A, x\big) \rightarrow \Z \times \CB_\delta.
\]
There are simple arguments, using for example the order map of the coamoeba \cite{FJ15}, 
which imply that the extended braid map $\bar\br$ is injective whenever $A$ consists of three 
elements that generate $\Z$. It follows that the braid map is injective whenever there exists an $i \in \{1, \dots, m\}$ such that $\gcd(a_i, \delta) = 1$. While we expect that the extended  braid map is aways injective,
no such simple argument is available in the general case.

\smallskip
In terms of computing the monodromy group of the $A$-hypergeometric system,
it is beneficial to choose $A$ with as few points as possible.
The monodromy group $\Mon_x(A, \beta)$ only depends on
$\beta$ and the toric degree $\delta$. Indeed, the collection $A$ has the same saturation as
the triple
\[
A' = \left[\begin{array}{ccc}  1 & 1 & 1\\
0 & 1 & \delta
\end{array}\right],
\]
which is of codimension one and, hence, admits a Mellin--Barnes basis.
Using the torus action, we find that $\C^{A'}\setminus E_{A'}$ is a trivial bundle,
with fibers $(\C^*)^2$, and base $\C \setminus D_{A'}$, where 
$D_{A'}$ is the dehomogenized discriminant defined as the intersection
of $D_{A'}$ with the linear subspace $x_1 = x_{2} = 1$.
We get a long exact sequence
\begin{center}
\begin{tikzcd}
\dots \arrow[r]  
& \Z^2\arrow[r] 
& \pi_1\big(\C^{A'}\,\setminus E_{A'}, x\big) \arrow[r] 
& \pi_1\big(\C \,\setminus D_{A'}, x\big) \arrow[r]
& 0.
\end{tikzcd}
\end{center}
The dehomogenized discriminant $D_{A'}$ is a rational zero-dimensional variety and, hence,
 consists of a single point \cite{Kap91}.
It follows that $\pi_1(\C^{A'} \setminus E_{A'}, x)$ has three generators, in this case. 
The corresponding monodromy matrices can be computed using Beukers' method \cite{Beu16}.
For example, if $\delta = 3$, then the monodromy group is generated by
\[
\left[\begin{array}{ccc}
e^{2\pi i \beta_1} & 0 & 0 \\
0 & e^{2\pi i \beta_1} & 0\\
0 & 0 & e^{2\pi i \beta_1}
\end{array}\right],
\quad
\left[\begin{array}{ccc}
0 & 1 & 0 \\
0 & 0 & e^{2\pi i (\beta_2-\beta_1)}\\
e^{2\pi i \beta_1} & 0 & 0
\end{array}\right],
\quad \text{ and } \quad
\left[\begin{array}{ccc}
-1 & 1 & 1 \\
0 & e^{2\pi i \beta_1} & 0\\
0 & 0 & e^{2\pi i \beta_1}
\end{array}\right].
\]

\bibliographystyle{amsplain}

\begin{thebibliography}{10}

\bibitem{Ado94}
{Adolphson, A.}, \emph{{Hypergeometric functions and rings generated by
  monomials}}, {Duke Math. J.} \textbf{{73}} ({1994}), no.~{2}, {269--290}.

\bibitem{BMW19}
Christine Berkesch, Laura~Felicia Matusevich, and Uli Walther, \emph{Torus
  equivariant {$D$}-modules and hypergeometric systems}, Adv. Math.
  \textbf{350} (2019), 1226--1266. \MR{3949610}

\bibitem{Ber11}
{Berkesch, C.}, \emph{{The rank of a hypergeometric system}}, {Compos. Math.}
  \textbf{{147}} ({2011}), no.~{1}, {284--318}.

\bibitem{Bes01}
{Bessis, D.}, \emph{{Zariski theorems and diagrams for braid groups}}, {Invent.
  Math.} \textbf{{145}} ({2001}), {487--507}.

\bibitem{Beu11}
{Beukers, F.}, \emph{{Irreducibility of $A$-hypergeometric systems}}, {Indag.
  Math. (N.S.)} \textbf{{21}} ({2011}), no.~{1--2}, {30--39}.

\bibitem{Beu16}
\bysame, \emph{{Monodromy of {$A$}-hypergeometric functions}}, {J. Reine Angew.
  Math.} \textbf{{718}} ({2016}), {183--206}.

\bibitem{BH89}
{Beukers, F.} and {Heckman, G.}, \emph{{Monodromy for the hypergeometric
  function ${}_nF_{n-1}$}}, {Invent. Math.} \textbf{{95}} ({1989}), no.~{2},
  {325--354}.

\bibitem{CDD99}
{Cattani, E.}, {D'Andrea, C.}, and {Dickenstein, A.}, \emph{{The
  $A$-hypergeometric system associated with a monomial curve}}, {Duke Math. J.}
  \textbf{{99}} ({1999}), no.~{2}, {179--207}.

\bibitem{DM86}
{Deligne, P.} and {Mostow, G. D.}, \emph{{Monodromy of hypergeometric functions
  and nonlattice integral monodromy}}, {Inst. Hautes {\'E}tudes Sci. Publ.
  Math.} ({1986}), no.~{63}, {5--89}.

\bibitem{DMM10}
{Dickenstein, A.}, {Matusevich, L. F.}, and {Miller, E.}, \emph{{Binomial
  $D$-modules}}, {Duke Math. J.} \textbf{{151}} ({2010}), no.~{3}, {385--429}.

\bibitem{DL81}
Igor Dolgachev and Anatoly Libgober, \emph{On the fundamental group of the
  complement to a discriminant variety}, Algebraic geometry ({C}hicago, {I}ll.,
  1980), Lecture Notes in Math., vol. 862, Springer, Berlin-New York, 1981,
  pp.~1--25. \MR{644816}

\bibitem{For15}
{Forsg{\aa}rd, J.}, \emph{{Tropical aspects of real polynomials and
  hypergeometric functions}}, Ph.D. thesis, {Stockholms universitet},
  {http://urn.kb.se/resolve?urn=urn:nbn:se:su:diva-116358}, {2015}.

\bibitem{FJ15}
{Forsg{\aa}rd, J.} and {Johansson, P.}, \emph{{On the order map for
  hypersurface coamoebas}}, {Ark. Mat.} \textbf{{53}} ({2015}), no.~{1},
  {79--104}.

\bibitem{GG87}
I.~M. Gel'fand and M.~I. Graev, \emph{Hypergeometric functions connected with
  the {G}rassmannian {$G_{3,6}$}}, Dokl. Akad. Nauk SSSR \textbf{293} (1987),
  no.~2, 288--293. \MR{884034}

\bibitem{GKZ94}
{Gel'fand, I. M.}, {Kapranov, M. M.}, and {Zelevinsky, A. V.},
  \emph{{Discriminants, resultants, and multidimensional determinants}},
  {Mathematics: Theory \& Applications}, {Birkh{\"a}user Boston, Inc.},
  {Boston, MA}, {1994}.

\bibitem{GKZ88}
{Gel'fand, I. M.}, {Zelevinsky, A. V.}, and {Kapranov, M. M.}, \emph{{Equations
  of hypergeometric type and Newton polyhedra}}, {(Russian) Dokl. Akad. Nauk
  SSSR} \textbf{{300}} ({1988}), no.~{3}, {529--534}, {translation in Soviet
  Math. Dokl. \textbf{37} (1988), no. 3, 678--682}.

\bibitem{GKZ89}
\bysame, \emph{{Hypergeometric functions and toric varieties}}, {(Russian)
  Funktsional. Anal. i Prilozhen} \textbf{{23}} ({1989}), no.~{2}, {12--26},
  {translation in Funct. Anal. Appl. \textbf{23} (1989), no. 2, 94--106}.

\bibitem{HL73}
{Hamm, H. A.} and {L{\^e}, D. T.}, \emph{{Un th{\'e}or{\`e}me de Zariski du
  type de Lefschetz}}, {Ann. Sci. {\'E}cole Norm. Sup. (4)} \textbf{{6}}
  ({1973}), {317--355}.

\bibitem{Hot91}
{Hotta, R.}, \emph{{Equivariant $D$-modules}}, {Proceedings of ICPAM Spring
  School in Wuhan}, {1991}.

\bibitem{Kap91}
{Kapranov, M. M.}, \emph{{A characterization of $A$-discriminantal
  hypersurfaces in terms of the logarithmic Gauss map}}, {Math. Ann.}
  \textbf{{290}} ({1991}), no.~{2}, {277--285}.

\bibitem{Lef24}
{Lefschetz, S.}, \emph{{L'Analysis situs et la g{\'e}om{\'e}trie
  alg{\'e}brique}}, {Collection de Monographies public{\'e}e sous la Direction
  de M.~{\'E}mile Borel}, {Paris: Gauthier-Villars. Reprinted in Lefschetz,
  Solomon (1971), Selected Papers, New York: Chelsea Publishing Co.}, {1924}.

\bibitem{L09}
Michael L\"{o}nne, \emph{Fundamental groups of projective discriminant
  complements}, Duke Math. J. \textbf{150} (2009), no.~2, 357--405.
  \MR{2569617}

\bibitem{MMW05}
{Matusevich, L. F.}, {Miller, E.}, and {Walther, U.}, \emph{{Homological
  methods for hypergeometric families}}, {J. Amer. Math. Soc.} \textbf{{18}}
  ({2005}), no.~{4}, {919--941 (electronic)}.

\bibitem{SST00}
{Saito, M.}, {Sturmfels, B.}, and {Takayama, N.}, \emph{{Gr{\"o}bner
  deformations of hypergeometric differential equations}}, {Algorithms and
  Computation in Mathematics}, vol.~{6}, {Springer-Verlag, Berlin}, {2000}.

\bibitem{SW12}
{Schulze, M.} and {Walther, U.}, \emph{{Resonance equals reducibility for
  $A$-hypergeometric systems}}, {Algebra Number Theory} \textbf{{6}} ({2012}),
  no.~{3}, {527--537}.

\bibitem{Yos97}
{Yoshida, M.}, \emph{{Hypergeometric functions, my love}}, {Aspects of
  Mathematics, E32}, {Friedr. Vieweg \& Sohn, Braunschweig}, {1997}, {Modular
  interpretations of configuration spaces}.

\bibitem{Zar37}
{Zariski, O.}, \emph{{A Theorem on the Poincare Group of an Algebraic
  Hypersurface}}, {Ann. of Math. (2)} \textbf{{38}} ({1937}), no.~{1},
  {131--141}.

\end{thebibliography}

\end{document}